\DeclareMathOperator{\ind}{\mathrm{ind}} 
\newcommand{\R}{\mathbb{R}}
\newcommand{\N}{\mathbb{N}}
\newcommand{\fix}{\mathop\mathrm{Fix}\nolimits}
\newcommand{\sign}{\mathop\mathrm{sign}\nolimits}
\newcommand{\cl}[1]{\overline{#1}}
\newcommand{\dist}{\mathop\mathrm{dist}\nolimits}
\newcommand{\dom}[1]{\mathcal{D}(#1)} 
\newcommand{\D}{\partial}
\newcommand{\X}{\times} 
\newcommand{\h}[1]{\widehat{#1}}
\newcommand{\Q}{\mathcal{Q}}
\newcommand{\giu}[1]{#1\sb\#} 
\newcommand{\su}[1]{#1\sp\#}
\renewcommand{\t}[1]{\widetilde{#1}}
\newcommand{\rkrs}{\mathbb{R}^k\times\mathbb{R}^s}
\newcommand{\ctnn}{C_T (N)}
\def\overstrike#1#2{{\setbox0\hbox{$#2$}\hbox to \wd0{\hss
      $#1$\hss}\kern-\wd0\box0}}
\newcommand{\avrg}[1]{\overstrike{#1}{/}}
\newtheorem{theorem}{Theorem}[section]
\newtheorem{lemma}[theorem]{Lemma}
\newtheorem{example}[theorem]{Example}
\newtheorem{proposition}[theorem]{Proposition}
\newtheorem{remark}[theorem]{Remark}
\numberwithin{equation}{section}
\author{Luca Bisconti} \author{Marco Spadini} \address{Dipartimento di
  Sistemi e Informatica, Universit\`a degli Studi di Firenze,Via S.\
  Marta 3, 50139 Firenze, Italy}
\date{\today}
\begin{document}

\title[On a class of delay separated variables equations] {Periodic
  perturbations with delay of separated variables differential
  equations}

\begin{abstract}
  We study the structure of the set of harmonic solutions to perturbed
  nonautonomous, $T$-periodic, separated variables ODEs on
  manifolds. The perturbing term is allowed to contain a finite delay
  and to be $T$-periodic in time.
\end{abstract}

\maketitle

\section{Introduction} 
In this paper we study $T$-periodic solutions to periodic
perturbations of separated variables ODEs on manifolds, allowing the
perturbing term to contain a finite delay. Namely, given $T > 0$, $r
\geq 0$ and a boundaryless smooth manifold $N \subseteq \R^d$, we
consider $T$-periodic solutions to equations of the form
\begin{equation} \label{DSVeq:0} \dot \zeta (t) = a(t)\Phi(\zeta(t))
  +\lambda \Xi \big(t, \zeta(t), \zeta(t-r)\big), \quad \lambda\geq 0,
\end{equation}
where $r>0$ is a finite time lag, $a\colon\R\to\R$ is a continuous
$T$-periodic function, $\Phi\colon N\to\R^d$ and $\Xi\colon \R\X N\X N
\to\R^d$ are given continuous tangent vector fields on $N$, in the
sense that $\Phi(\xi)$ belongs to the tangent space $T_\xi N$, for any
$\xi\in N$, and $\Xi$ is $T$-periodic in the first variable and
tangent to $N$ in the second variable, that is
\begin{equation*}
  \Xi(t, \xi, \eta)=\Xi(t+T, \xi, \eta) \in  T_\xi N,\qquad \forall 
  (t, \xi, \eta) \in \R \X N\X N.
\end{equation*}
We also assume that the average $\avrg{a}$ of $a$ is nonzero, i.e.,
\begin{equation} \label{DSVeq:average-nn} \avrg{a}:=
  \frac{1}{T}\int^T_0 a(t) dt\ne 0.
\end{equation}
Clearly, $a(t)$ can be written as $\avrg{a}+\alpha(t)$ where
$\alpha\colon\R\to\R$ is continuous, $T$-periodic and with zero
average. In this way Equation \eqref{DSVeq:0} can be obtained by the
introduction of a $T$-periodic perturbation with null average in the
coefficient $\avrg{a}$ in the following equation:
\[
\dot \zeta (t) = \avrg{a}\Phi\big(\zeta(t)\big) +\lambda \Xi\big(t, \zeta(t),
\zeta(t-r)\big), \qquad \lambda\geq 0.
\]

Our main objective is to provide information on the structure of the
set of $T$-periodic solutions of \eqref{DSVeq:0} (recall that $T>0$ is
given). More precisely, we will give conditions ensuring the existence
of a connected set of pairs $(\lambda, \zeta)$, $\lambda\geq 0$ and
$\zeta$ a $T$-periodic solution of \eqref{DSVeq:0}, such that either
$\lambda\neq 0$ or $\zeta$ is nonconstant, whose closure in an
appropriate topological space is not compact and meets the set of
pairs formed by constant solutions corresponding to $\lambda=0$.

To pursue our goal we use topological tools as the fixed point index
and the degree of tangent vector fields on manifolds (see, e.g.\
\cite{fupespa}). A deceptively natural approach would be to use
a time transformation as in e.g.\ \cite{spaSepVar} and then apply
directly one of the known results for periodic perturbations of
autonomous ODEs on manifolds as, for instance, \cite{FS09}. This naive
procedure does not work because the time-transformed perturbing term
would result in a form difficult to investigate. Our strategy, instead,
consists of recasting and combining the arguments of \cite{spaSepVar}
and \cite{FS09}. To get a general idea of how we proceed,
consider the particular case when $\Phi$ is $C^1$ (the general case
when $\Phi$ is only continuous boils down to it via an approximation
procedure). When $\alpha\equiv 0$ and the perturbation $\Xi$ does not
depend on the delay, as in \cite{fuspa:1997}, our condition is
obtained through a formula (see e.g.\ \cite{fupespa}) relating the
degree of the tangent vector field $\Psi$ to fixed point index of the
translation operator at time $T$, $P^{\Phi}_T$, associated to the
equation
\begin{equation}
  \label{DSVeq:noPertEq}
  \dot{\zeta} = \Phi(\zeta).
\end{equation}
When $a$ is constant but the perturbing term in equation
\eqref{DSVeq:0} is allowed to contain a delay, as in \cite{FS09}, one
needs to adapt this approach: the operator $P^{\Phi}_T$ is replaced
with its infinite-dimensional analogous $Q^\Phi_T$ and the result is
obtained through a formula that associates the degree of $\Phi$ with
the fixed point index of $Q^\Phi_T$. In the present paper, in order to
allow $a$ to be nonconstant, we revisit the construction in
\cite{FS09} and provide a relation (see Theorem \ref{thgrado} below)
between the degree of $\Phi$ with the fixed point index of the
\emph{infinite dimensional Poincar\'e type $T$-translation operator}
$Q^{a\Phi}_T$ associated to the separated variables equation
$\dot{\zeta} = a(t)\Phi(\zeta)$. Namely, $Q^{a\Phi}_T$ is the operator 
that associates to any element $\varphi\in C([-r,0],N)$ the function 
given by $\theta\mapsto \zeta\big(\varphi(0),\theta+T\big)$, with 
$\theta\in [-r, 0]$. Here $\zeta(p,\cdot)$ denotes the unique
solution of the following the Cauchy problem on $N$:
\begin{equation*}
 \dot \zeta=a(t)\Phi(\zeta),\quad \zeta(0)=p.
\end{equation*}
Some preliminary results strictly related to this topic can also be 
found in \cite{BIS:2011, FS09, spaSepVar}.

In order to illustrate our result, we consider two classes of
applications. The first, rather straightforward, to the set of
$T$-periodic solutions of a particular class of weakly coupled
differential equations on manifolds.  In the second, we consider delay
periodic perturbations to a family of semi-explicit
differential-algebraic equations (DAEs). More precisely, we study the
structure of the set of $T$-periodic solutions to the following
problem
\begin{equation} \label{DSVeq:0a} \left\{
    \begin{array}{l} 
      \dot x(t) =   a(t)f(x(t), y(t)) +\lambda h(t, x(t), y(t), x(t-r), 
      y(t-r)),\; \lambda\geq 0,\\
      g(x, y) = 0, \\
    \end{array} \right.
\end{equation} 
where $r$ is as in \eqref{DSVeq:0}, $f\colon U\to\R^k$, $h\colon\R\X
U\X U\to\R^k$ and $g\colon U\to\R^s$ are given continuous maps defined
on an open connected set $U\subseteq \rkrs\cong \R^d$ and assume that
$h$ is $T$-periodic in the variable $t$.  We also require that $g\in
C^{\infty}(U, \R\sp{s})$, with the property that the Jacobian matrix
$\D_2 g (p,q)$ of $g$, with respect to the last $s$ variables, is
invertible for any $(p,q)\in U$.  Observe that this assumption implies
that $0$ is a regular value for $g$. So, $g^{-1}(0)\subseteq U$ is a
closed $C^{\infty}$ submanifold of $\rkrs$ of dimension $k$.
Throughout the paper we will always denote the manifold $g^{-1}(0)$ by
$M$; in this contest, the points of $M$ will written as pairs $(p,q)$.
It is well-known (see e.g.\ \cite{dae}) that under these hypotheses it
is always possible to transform the above DAE into an equivalent ODE
of type \eqref{DSVeq:0} on the differentiable manifold $M$. Actually,
as a direct consequence of the Implicit Function Theorem, $M$ can be
locally represented as a graph of some map from an open subset of
$\R^k$ to $\R^s$ and, hence Equation \eqref{DSVeq:0a} can be locally
decoupled. However, globally, this might be false or not convenient
for our purpose (see, e.g.\ \cite{CaSp}).

\section{Preliminaries and basic notions} \label{DSVeq:section2}

In this section, we recall some basic facts and definitions about the
function spaces used throughout the paper.

Let $I\subseteq\R$ be an interval and let $X\subseteq\R^d$.  Given
$r\in\N\cup\{0\}$, the set of all $X$-valued $C^r$-functions defined
on $I$ is denoted by $C^r(I,X)$. When $I=\R$, we simply write $C^r(X)$
instead of $C^r(\R,X)$ and, when $r=0$ we simplify the notation
writing $C(I,X)$ in place of $C^0(I,X)$ and $C(X)$ instead of
$C^0(X)$. Let $T>0$ be given, by $C_T (\R^d)$ we mean the Banach space
of all the continuous $T$-periodic functions $\zeta\colon\R\to\R^d$
whereas $C_T (X)$ denotes the metric subspace of $C_T (\R^d)$
consisting of all those $\zeta\in C_T (\R^d)$ that take values in
$X$. It not difficult to prove that $C_T (X)$ is complete if and only
if $X$ is closed in $\R^d$.

Let $N\subseteq \R^d$ be a smooth differentiable manifold, and
consider the following diagram of closed embeddings:
\begin{equation} \label{DSVeq:graph}
  \begin{diagram} \node{[0,\infty)\X N} \arrow{e} \node{[0,\infty)\X
      C_T(N)} \\ \node{N} \arrow{e} \arrow{n} \node{C_T(N)} \arrow{n}
  \end{diagram}
\end{equation} 
we identify any space in the above diagram with its image.  In
particular, $N$ will be regarded as its image in $C_T (N)$ under the
embedding that associates to any $p\in N$ the function
$\overline{p}\in C_T (N)$ constantly equal to $p$. Furthermore, we
will regard $N$ as the slice $\{0\}\X N\subseteq [0, \infty)\X N$ and,
analogously, $C_T (N)$ as $\{0\}\X C_T(N)$.  Thus, if $\Omega$ is a
subset of $[0, \infty)\X C_T(N)$, then $\Omega\cap N$ represents the
set of points of $N$ that, regarded as constant functions, belong to
$\Omega$.  Namely, with this convention, we have that
\begin{equation} \label{DSVeq:convention-constan-func} \Omega\cap
  N=\big\{p \in N : (0,\cl{p})\in\Omega\big\}.
\end{equation}

\smallskip Let $\Theta\colon\R\X N\to\R^d$ be a time-dependent tangent
vector field and assume that the Cauchy problem
\begin{subequations}\label{SVDAE:eq-testCP}
  \begin{equation}\label{SVDAEs:eq-test}
    \dot \zeta =\Theta (t, \zeta),\quad t\in\R,
  \end{equation} 
  \begin{equation}
    \zeta(0)=p,
  \end{equation}
\end{subequations}
admits unique solution for all $p\in N$. Denote by
\begin{equation*}
  \mathcal{D}=\big\{ (\tau, p) \in \R \X N : 
  \textrm{the solution of \eqref{SVDAE:eq-testCP} 
    is continuable up to $t=\tau$} \big\}.
\end{equation*}
A well known argument based on some global continuation properties of
the flows (see, e.g. \cite{Lang-1}) shows that $\mathcal{D}$ is open
set containing $\{0\}\X M$.  Let $P^\Theta\colon\mathcal{D}\to N$ be
the map that associates to each $(t, p)\in \mathcal{D}$ the value
$\zeta (t)$ of the maximal solution $\zeta$ to
\eqref{SVDAE:eq-testCP}, i.e.\ $P^\Theta(t, p)=\zeta (t)$. Here and in
the sequel, given $\tau\in\R$, we denote by $P^\Theta_\tau=
P^\Theta(\tau,\cdot)$, the (Poincar\'e) $\tau$-translation operator
associated to Equation \eqref{SVDAEs:eq-test}.  So that, the domain of
$P^\Theta_\tau$ is an open (possibly empty) set formed by the points
$p \in N$ for which the maximal solution of \eqref{SVDAEs:eq-test},
starting from $p$ at $t = 0$, is defined up to $\tau$.

The remark below, borrowed by \cite{spaSepVar}, plays a crucial role
in what follows.

\begin{remark} \label{SVDAEs:remark-on-ivp} Let $\Phi \colon N
  \to\R^d$ as in \eqref{DSVeq:0}.  Consider the following Cauchy
  problems
  \begin{subequations}
    \begin{gather}
      \dot \zeta = \Phi(\zeta), \quad \zeta(0)=\zeta_0,
      \label{DSVeq:A}\\[-0.3em]
      \intertext{and} \dot \zeta = a(t)\Phi(\zeta), \quad
      \zeta(0)=\zeta_0, \label{DSVeq:B}
    \end{gather}
  \end{subequations}
  with $\zeta_0\in N$.  Let $J$ and $I$ be the intervals on which are
  defined the (unique) maximal solutions of \eqref{DSVeq:A} and
  \eqref{DSVeq:B} respectively.  Suppose also that $\Phi$ is $C^1$, so
  that the uniqueness of solutions for the above problems is
  guaranteed. Let $\xi\colon J \to U$ and $u\colon I \to U$ be the
  maximal solutions of \eqref{DSVeq:A} and \eqref{DSVeq:B}
  respectively, with $I$ and $J$ the relative maximal intervals of
  existence.

  Let $t>0$ be such that $\int_0^{l} a(s) ds\in I$ for all $l\in
  [0,t]$, then it follows that
  \begin{equation*}
    \xi(t) = u\left(\int_0^t a(s)ds\right),
  \end{equation*} 
  and hence $t \in J$.  Conversely, by using a standard maximality
  argument, one can prove that $t \in J$ implies $\int_0^ta(s)ds\in
  I$.  Assume that the average $\avrg{a}$ of $a$ is $1$. Define the
  map $\phi_a\colon J\to I$, $t\mapsto\phi_a(t)=\int^t_0a(s)ds$.
  Notice that, if $T\in J$, then $\phi_a(T)= T \in I$ and so
  $\big(\xi(T),\sigma(T)\big) = \big(x(T), y(T)\big)$. Observe that
  $\phi$ needs not be invertible unless, of course, $a(t)\neq 0$ for
  all $t\in\R$.
\end{remark}

This remark has important consequences in terms of the $T$-translation
(Poincar\'e) operators associated to the Cauchy problems
\eqref{DSVeq:A} and \eqref{DSVeq:B}. We collect them in the following
proposition.

\begin{proposition} \label{SVDAEs:equivalence-T-res} Let $P^{a\Phi}_T$
  and $P^{\Phi}_T$ be the $T$-translation operators associated to the
  Cauchy problems \eqref{DSVeq:A} and \eqref{DSVeq:B},
  respectively. Then $P^{a\Phi}_T (\zeta_0)$ is defined if and only if
  so is $P^\Phi_T(\zeta_0)$. In particular, if we assume in addition
  that the average of $a$ on $[0,T]$ is equal to $1$, we have
  $P^\Phi_T(\zeta_0)=P^{a\Phi}_T(\zeta_0)$, whenever $P^{a\Phi}_T$ or
  $P^{\Phi}_T$ is defined.
\end{proposition}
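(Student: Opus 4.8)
The plan is to derive both assertions directly from Remark~\ref{SVDAEs:remark-on-ivp}, i.e.\ from the time rescaling $\phi_a(t):=\int_0^t a(s)\,ds$ relating \eqref{DSVeq:A} and \eqref{DSVeq:B}. The first thing I would record is the pointwise identity $u(t)=\xi\bigl(\phi_a(t)\bigr)$, where $\xi\colon J\to N$ and $u\colon I\to N$ denote the maximal solutions of \eqref{DSVeq:A} and \eqref{DSVeq:B}: setting $v(t):=\xi\bigl(\phi_a(t)\bigr)$ one has $v(0)=\xi(0)=\zeta_0$ and, by the chain rule, $\dot v(t)=a(t)\,\Phi\bigl(v(t)\bigr)$ at every $t$ with $\phi_a(t)\in J$, so $v$ coincides with $u$ by uniqueness of solutions (this is where the $C^1$ assumption on $\Phi$ in Remark~\ref{SVDAEs:remark-on-ivp} is used; it is also the hypothesis under which $P^{\Phi}_T$ and $P^{a\Phi}_T$ are single valued to begin with). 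Observe that $\phi_a$ is continuous with $\phi_a(0)=0$, and that if the average of $a$ on $[0,T]$ equals $1$ then $\phi_a(T)=\int_0^T a(s)\,ds=T$.

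For the first assertion I would then exploit the domain correspondence carried by this identity. Indeed $P^{a\Phi}_T(\zeta_0)$ is defined, i.e.\ $T\in I$, exactly when the curve $l\mapsto\xi\bigl(\phi_a(l)\bigr)$ is defined on all of $[0,T]$, that is, when $\phi_a(l)\in J$ for every $l\in[0,T]$; in particular $\phi_a(T)\in J$, and since $\phi_a(T)=T$ under the averaging assumption, this forces $T\in J$, i.e.\ $P^{\Phi}_T(\zeta_0)$ is defined. For the converse, from $T\in J$ one gets $[0,T]\subseteq J$ (as $J$ is an interval containing $0$), and then one verifies — via the standard maximality argument alluded to in Remark~\ref{SVDAEs:remark-on-ivp} — that $\phi_a(l)\in J$ for every $l\in[0,T]$, whence $T\in I$ by the same identity.

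The second assertion is then immediate: whenever one, hence by the first part also the other, of the two operators is defined, the identity above yields
\[
  P^{a\Phi}_T(\zeta_0)=u(T)=\xi\bigl(\phi_a(T)\bigr)=\xi(T)=P^{\Phi}_T(\zeta_0),
\]
using once more $\phi_a(T)=T$, valid because the average of $a$ on $[0,T]$ is $1$.

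The step I expect to be the real obstacle is the domain bookkeeping in the converse half of the first assertion: the warped curve $l\mapsto\phi_a(l)$ need not stay inside $[0,T]$ before coming back to $\phi_a(T)=T$ (this already happens when $a$ changes sign), so one must ensure it never escapes the maximal interval $J$ of the autonomous flow. This is precisely why Remark~\ref{SVDAEs:remark-on-ivp} is phrased with the clause ``for all $l\in[0,t]$'' rather than only at the endpoint, and it is the point where the maximality argument must be run carefully; the remaining computations (the chain rule, the reduction to uniqueness, and the final substitution) are routine.
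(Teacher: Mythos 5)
Your reconstruction takes the same route the paper does: Proposition~\ref{SVDAEs:equivalence-T-res} is proved there by the one-line ``Follows immediately from Remark~\ref{SVDAEs:remark-on-ivp},'' and you are simply unpacking that Remark — the conjugacy $u(t)=\xi\bigl(\phi_a(t)\bigr)$ between the flows of \eqref{DSVeq:B} and \eqref{DSVeq:A} (which you derive correctly, incidentally repairing the typo in the Remark as printed, where the roles of $\xi$ and $u$ are interchanged), the domain correspondence this conjugacy carries, and the endpoint identity $\phi_a(T)=T$ when $\avrg{a}=1$. That is exactly the intended argument.

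However, the step you yourself single out as ``the real obstacle'' — the direction $P^{\Phi}_T(\zeta_0)$ defined $\Rightarrow P^{a\Phi}_T(\zeta_0)$ defined — is a genuine gap, and a ``maximality argument run carefully'' cannot close it. What maximality actually yields is the opposite inclusion: if the reparametrized solution reaches time $T$, i.e.\ $T\in I$, then $\phi_a\bigl([0,T]\bigr)\subseteq J$; combined with $\phi_a(T)=T$ this gives $P^{a\Phi}_T$ defined $\Rightarrow P^{\Phi}_T$ defined and equal, which you prove cleanly. It does \emph{not} give $[0,T]\subseteq J\Rightarrow\phi_a\bigl([0,T]\bigr)\subseteq J$, and that implication is false: take $N=\R$, $\Phi(\zeta)=\zeta^2$, $\zeta_0=1/2$, $T=1$ and $a(t)=1+A\cos(2\pi t)$ with $A$ large. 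Then $\xi(s)=1/(2-s)$, so $J=(-\infty,2)$ and $P^{\Phi}_1(1/2)$ exists; yet $\phi_a(1/4)=1/4+A/(2\pi)>2$, so the solution of \eqref{DSVeq:B} blows up before $t=1$ and $P^{a\Phi}_1(1/2)$ is undefined. So the ``if and only if'' in the Proposition (and the ``Conversely\ldots'' clause of Remark~\ref{SVDAEs:remark-on-ivp}) overreaches; only one direction holds, and happily it is the one you established and the one the sequel actually relies on (Proposition~\ref{prop:iaphi=iphi} and, via $h^{-1}(W)\subseteq\dom{P^{a\Phi}_T}$, the proof of Theorem~\ref{thgrado}). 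The gap you flagged is real, it is inherited from the paper's Remark rather than introduced by you, and it cannot be patched within this line of argument.
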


\begin{proof}
  Follows immediately from Remark \ref{SVDAEs:remark-on-ivp}.
\end{proof}

The following result is proved in \cite[Corollary 2.4]{spaSepVar}

\begin{proposition}\label{prop:iaphi=iphi}
  Let $\Phi \colon N \to \R^k$ be a $C^1$ tangent vector field, and
  let $a \colon\R\to\R$ be continuous and $T$-periodic with
  $\frac{1}{T} \int^T_0 a(s) ds = 1$.  Given an open subset $V$ of
  $N$, if $\ind(P^{a\Phi}_T, V)$ is well defined, then so is
  $\ind(P^\Phi_T, V)$ and
  \begin{equation} \label{DSVeq:iaphi=iphi} \ind(P^{a\Phi}_T, V) =
    \ind(P^{\Phi}_T, V) = \deg(-\Phi, V).
  \end{equation}
\end{proposition}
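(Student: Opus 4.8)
The plan is to deduce the claim by combining Proposition \ref{SVDAEs:equivalence-T-res} with the classical formula relating the fixed point index of the Poincar\'e operator to the degree of the vector field. First I would observe that by Proposition \ref{SVDAEs:equivalence-T-res}, since $\avrg{a}=1$, the operators $P^{a\Phi}_T$ and $P^\Phi_T$ coincide as maps wherever either is defined, and have the same domain; in particular, if $\ind(P^{a\Phi}_T,V)$ is well defined (i.e.\ $P^{a\Phi}_T$ has no fixed points on $\D V$ and $V$ is contained in the domain, up to the usual admissibility conditions), then the very same is true for $P^\Phi_T$, and the two indices are equal by the mere equality of the maps. This gives the first equality in \eqref{DSVeq:iaphi=iphi} essentially for free.

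The remaining and substantive point is the second equality, $\ind(P^\Phi_T,V)=\deg(-\Phi,V)$. Here I would invoke the well-known relationship between the fixed point index of the flow's $T$-translation operator of the \emph{autonomous} equation \eqref{DSVeq:noPertEq} and the degree of the tangent vector field $-\Phi$ on the manifold $N$; this is exactly the type of formula recalled in the introduction (see the discussion around \cite{fupespa, fuspa:1997}). Concretely, one uses that the fixed points of $P^\Phi_T$ in $V$ are the initial values of $T$-periodic solutions of $\dot\zeta=\Phi(\zeta)$, and a homotopy argument (shrinking the period parameter $t$ from $T$ to $0$, or equivalently deforming the time-$T$ map to the identity along the flow) identifies $\ind(P^\Phi_T,V)$ with the fixed point index of a map whose "linearization" is governed by $-\Phi$, yielding $\deg(-\Phi,V)$. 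Since $\Phi$ is assumed $C^1$, the flow is well-defined and smooth, so all these operators and homotopies make sense and the admissibility of $V$ for the index is preserved.

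The main obstacle — though it is more bookkeeping than conceptual — is to make sure the admissibility hypotheses transfer correctly: one must check that "$\ind(P^{a\Phi}_T,V)$ is well defined" forces $V$ (more precisely $\cl V$) to sit inside the common domain of the two operators and that neither operator has fixed points on the boundary $\D V$, so that all three quantities in \eqref{DSVeq:iaphi=iphi} are simultaneously meaningful. Once this is in place, the first equality is immediate from Proposition \ref{SVDAEs:equivalence-T-res} and the second is the cited result \cite[Corollary 2.4]{spaSepVar}, so there is nothing further to prove. I would therefore keep the argument short: state the domain/boundary transfer, invoke Proposition \ref{SVDAEs:equivalence-T-res} for the first equality, and quote the classical degree–index formula for the second.
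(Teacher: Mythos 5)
The paper itself does not prove this proposition; it simply records that it is proved in \cite[Corollary 2.4]{spaSepVar}, so there is no in-paper argument to compare against. Your reconstruction is a reasonable account of how one would prove it from the surrounding material, and it splits the claim in the natural way. The first equality $\ind(P^{a\Phi}_T,V)=\ind(P^\Phi_T,V)$ is indeed immediate from Proposition \ref{SVDAEs:equivalence-T-res}: since $\avrg{a}=1$, the two Poincar\'e operators have the same domain and agree pointwise on it, so they have the same fixed-point set in $V$ and hence identical admissibility and identical index. The second equality $\ind(P^\Phi_T,V)=\deg(-\Phi,V)$ is the classical index--degree formula for the time-$T$ translation map of an autonomous flow; the correct reference is \cite{fupespa} (or \cite{fuspa:1997}), as you say earlier in your sketch, not \cite[Corollary 2.4]{spaSepVar} as your final sentence asserts --- that citation is the whole proposition being proved, so invoking it there would be circular.

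One further small imprecision: the admissibility bookkeeping you flag is lighter than you make it sound. In this paper ``$\ind(P^{a\Phi}_T,V)$ is defined'' means only that $V\subseteq\dom{P^{a\Phi}_T}$ and that $\fix(P^{a\Phi}_T)\cap V$ is compact (the open-set formulation, exactly parallel to the statement for $Q^{a\Phi}_T$ in the proof of Theorem \ref{thgrado}); one need not ask that $\cl{V}$ lie in the domain nor check $\D V$ separately. Since the two maps coincide, these conditions transfer verbatim, and all three quantities in \eqref{DSVeq:iaphi=iphi} are simultaneously well defined with no extra work.
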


\section{The degree of tangent vector fields on manifolds and some
  related properties}\label{DSVeq:section-degree}

We now remind some facts about the notion of degree of tangent vector
fields on manifolds. Recall that if $\Theta \colon N\to\R^d$ is a
tangent vector field on the differentiable manifold $N\subseteq\R^d$
which is (Fr\'echet) differentiable at $p\in N$ and $\Theta(p) = 0$,
then the differential $\textrm{d}_{p}\Theta\colon T_{p}N\to\R^d$ maps
$T_{p}N$ into itself (see, e.g., \cite{milnor}), so that, the
determinant $\det\textrm{d}_{p}\Theta$ is defined.  In the case when
$p$ is a nondegenerate zero (i.e.\ $\textrm{d}_{p} \Theta\colon T_{p}
N\to \R^d$ is injective), $p$ is an isolated zero and
$\det\textrm{d}_{p}\Theta\ne 0$. Let $W$ be an open subset of $N$ in
which we assume $\Theta$ admissible for the degree, that is we suppose
the set $\Theta^{-1}(0)\cap W$ is compact. Then, it is possible to
associate to the pair $(w, W)$ an integer, $\deg (w, W)$, called the
degree (or characteristic) of the vector field $\Theta$ in $W$ (see
e.g.\ \cite{fupespa,difftop}), which, roughly speaking, counts
(algebraically) the zeros of $\Theta$ in $W$ in the sense that when
the zeros of $\Theta$ are all non-degenerate, then the set
$\Theta^{-1}(0)\cap W$ is finite and
\begin{equation} \label{DSVeq:deg} \deg(\Theta, W) = \sum_{{q}
    \in\Theta^{-1}(0)\cap W} \sign\, \det \textrm{d}_q\Theta.
\end{equation}
The concept of degree of a tangent vector field is related to the
classical one of Brouwer degree (whence its name), but the former
notion differs from the latter when dealing with manifolds.  In
particular, this notion of degree does not need the orientation of the
underlying manifolds. However, when $N=\R^d$, the degree of a vector
field $\deg(\Theta, W)$ is essentially the well known Brouwer degree
of $\Theta$ on $W$ with respect to $0$.  The degree of a tangent
vector field satisfies all the classical properties of the Brouwer
degree: \emph{Solution, Excision, Additivity, Homotopy Invariance,
  Normalization} etc. For an ampler exposition of this topic, we refer
e.g.\ to \cite{fupespa, difftop, milnor}.

The Excision property allows the introduction of the notion of
\emph{index} of an isolated zero of a tangent vector field.  Let $q\in
N$ be an isolated zero of a tangent vector field $\Theta\colon
N\to\R^d$. Obviously, $\deg(\Theta,V)$ is well defined for any open
set $V \subseteq N$ such that $V \cap \Theta^{-1}(0) = \{q\}$.
Moreover, by the Excision property, the value of $\deg(\Theta,V)$ is
constant with respect to such $V$'s. This common value of
$\deg(\Theta,V)$ is, by definition, the index of $\Theta$ at $q$, and
is denoted by $\textrm{i}(\Theta,q)$.  Using this notation, if
$(\Theta,W)$ is admissible, by the Additivity property we have that if
all the zeros in $W$ of $\Theta$ are isolated, then
\begin{equation*}
  \deg(\Theta, W) = \sum_{q\in \Theta^{-1}(0)\cap W} \textrm{i} (\Theta, q).
\end{equation*}
By formula \eqref{DSVeq:deg} we have that if $q$ is a nondegenerate
zero of $\Theta$, then $\textrm{i} (\Theta, q) = \sign \det
\textrm{d}_q\Theta$.

\subsection{Tangent vector fields on implicitly defined manifolds}

Let $\Pi:\R\times N\to\R^d$ be a continuous time-dependent tangent
vector field on the differentiable manifold $N\subseteq\R^d$, that is
$\Pi(t,\zeta)\in T_{\zeta}N$ for each $(t,\zeta)\in\R\times N$. Assume
that there is a connected open subset $U$ of $\R^d$ and a smooth map
$\ell:U\to\R^s$, $0<s<d$, with the property that $N=\ell^{-1}(0)$.
Suppose also that, up to an orthogonal transformation, one can realize a
decomposition $\R^d=\R^k\times\R^s$ such that the partial derivative,
$\partial_2 \ell(x,y)$, of $\ell$ with respect to the second
$s$-variable is invertible for each $(x,y)\in U$.  \smallskip

We will need the following fact.
\begin{remark}\label{remext0}
  Let $\Pi$ be as above. Since $\R\times N$ is a closed subset of the
  metric space $\R\times U$, the well known Tietze's Theorem (see
  e.g.\ \cite{Dug}) implies that there exists a continuous extension
  $\h\Pi:\R\times U\to\R^d$ of $\Pi$.
\end{remark}

This fact can be interpreted as the possibility of ``extending'' a
differential equation on $N$ to a neighborhood $U$ of $N$ in
$\R^d$. Consider, in fact, the following differential equation on $N$:
\begin{equation}\label{eq.phi.onM}
  \dot\eta= \Pi(t,\eta).
\end{equation}
Let us also consider the following ``extended'' equation on the
neighborhood $U$ of $N$ in $\R^d$:
\begin{equation}\label{eq.phitilde.onM}
  \dot\eta= \h\Pi(t,\eta),
\end{equation}
where $\h\Pi$ is any extension of $\Pi$ as in Remark \ref{remext0}.
Observe that the solutions of \eqref{eq.phi.onM} are also solutions of
\eqref{eq.phitilde.onM}. Conversely, the solutions of
\eqref{eq.phitilde.onM} that meet $N$ do actually lie on $N$ and thus
are solutions of \eqref{eq.phi.onM}.
%

Setting $\eta=(x,y)$, Equation \eqref{eq.phitilde.onM} can be
conveniently rewritten as follows:
\begin{equation}\label{eq.phi12.onM}
  \left\{
    \begin{array}{l}
      \dot x= \h\Pi_1(t,x,y),\\
      \dot y= \h\Pi_2(t,x,y),
    \end{array}\right.
\end{equation}
where,
according to the above decomposition of $\R^d$, for any $(t,\xi)\in\R\X
N$, $\xi=(x,y)$, we can write that
\begin{equation*}
  \Pi(t,\xi)=\Pi(t,x,y)=\big(\Pi_1(t,x,y), \Pi_2(t,x,y)\big)\in \rkrs.
\end{equation*}
As a direct consequence of our assumptions, it follows that
\begin{equation}\label{phi2}
  \Pi_2(t,x,y)=-\big(\partial_2\ell(x,y)\big)^{-1}\partial_1\ell(x,y)\Pi_1(t,x,y).
\end{equation}
Indeed, the condition $\Pi(t,\xi)\in T_{\xi}N$ is equivalent to
$\Pi(t,\xi)\in\ker\ell'(x,y)$, and one has that, for each
$(t,(x,y))\in\R\times N$,
\begin{equation*}
  0= \ell'(x,y)\Pi(t,x,y)= \partial_1\ell(x,y)\Pi_1(t,x,y)+\partial_2\ell(x,y)\Pi_2(t,x,y),
\end{equation*}
which implies \eqref{phi2}. Here $\ell '(x,y)$ denotes the Fr\'echet
derivative of $\ell$ at $(x,y)$.

\medskip Consider now Equation \eqref{eq.phi.onM} on $N$ again. The
simple result below shows that, when the above assumptions on $g$
hold, \eqref{eq.phi.onM} can be equivalently rewritten as the
following differential-algebraic equation:
\begin{equation}\label{eq.phi.DAE}
  \left\{
    \begin{array}{l}
      \dot x=  \h\Pi_1(t,x,y),\\
      \ell(x,y)=0.
    \end{array}\right.
\end{equation}
Here $\h\Pi$ is \emph{any} continuous extension of $\Pi$ as in Remark
\ref{remext0}, and by a \emph{solution} of \eqref{eq.phi.DAE} we mean
a pair of $C^1$ functions $x\colon J\to\R^k$ and $y\colon J\to\R^s$,
$J$ a nontrivial interval, with the property that $\dot
x(t)=\Pi_1(t,x(t),y(t))$ and $\ell\big(x(t),y(t)\big)=0$ for all $t\in
J$.

\begin{lemma}\label{lemmequiv}
  The equation \eqref{eq.phi.onM} is equivalent to the DAE
  \eqref{eq.phi.DAE}.
\end{lemma}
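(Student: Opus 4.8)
The plan is to prove the two inclusions of solution sets, using the defining relation \eqref{phi2} for the tangent component $\Pi_2$ together with the structure of the constraint manifold $N=\ell^{-1}(0)$. First I would take a solution $\eta=(x,y)\colon J\to N$ of \eqref{eq.phi.onM}; since $\eta(t)\in N$ for all $t$, we have $\ell(x(t),y(t))=0$ identically, and since $\Pi(t,\eta(t))=\widehat\Pi(t,\eta(t))$ on $N$, the first component reads $\dot x(t)=\Pi_1(t,x(t),y(t))=\widehat\Pi_1(t,x(t),y(t))$. Hence $(x,y)$ solves \eqref{eq.phi.DAE} directly; this direction is essentially immediate from the definition of a solution of \eqref{eq.phi.DAE}.

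For the converse, I would start from a solution $(x,y)\colon J\to\R^k\times\R^s$ of \eqref{eq.phi.DAE}. The algebraic constraint $\ell(x(t),y(t))=0$ forces $(x(t),y(t))\in N$ for all $t\in J$, so in particular $y$ is $C^1$ as part of the definition of solution and the pair is a curve on $N$. Differentiating $\ell(x(t),y(t))=0$ in $t$ gives $\partial_1\ell(x(t),y(t))\dot x(t)+\partial_2\ell(x(t),y(t))\dot y(t)=0$; using $\dot x(t)=\widehat\Pi_1(t,x(t),y(t))=\Pi_1(t,x(t),y(t))$ (the last equality because $(x(t),y(t))\in N$) and the invertibility of $\partial_2\ell$, I would solve for $\dot y(t)$ and compare with \eqref{phi2} to conclude $\dot y(t)=\Pi_2(t,x(t),y(t))$. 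Together with $\dot x(t)=\Pi_1(t,x(t),y(t))$ this is exactly $\dot\eta(t)=\Pi(t,\eta(t))$, i.e.\ $(x,y)$ solves \eqref{eq.phi.onM}.

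The only mildly delicate point — and the step I expect to be the main obstacle to state cleanly rather than hard mathematically — is making sure the regularity bookkeeping is airtight: one must know a priori that $y$ is $C^1$ before differentiating the constraint. This is handled by the stipulation in the definition of a solution of \eqref{eq.phi.DAE} that both $x$ and $y$ be $C^1$; alternatively one could invoke the Implicit Function Theorem locally to express $y$ as a $C^\infty$ function of $x$ along the solution, which upgrades the regularity automatically from that of $x$. I would phrase the argument so as to rely on the definition already given, keeping the proof short, and note in passing that \eqref{phi2} is precisely the identity that matches the differentiated constraint with the prescribed tangent field, which is why the equivalence holds.
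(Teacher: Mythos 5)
Your proof is correct and takes essentially the same approach as the paper's: the forward direction is immediate from the definitions, and the converse proceeds by differentiating the constraint $\ell(x(t),y(t))=0$, solving for $\dot y$ using the invertibility of $\partial_2\ell$, and matching against \eqref{phi2}, with the $C^1$ regularity of $y$ supplied directly by the definition of a solution of \eqref{eq.phi.DAE}. The only cosmetic difference is that you explicitly record that $\widehat\Pi_1=\Pi_1$ along curves lying in $N$, a point the paper leaves implicit.
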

\begin{proof}
  Let $x\colon J\to\R^k$ and $y\colon J\to\R^s$ be $C^1$ maps defined
  on an interval $J$ with the property that
  $t\mapsto\xi(t)=\big(x(t),y(t)\big)$ is a solution of
  \eqref{eq.phi.onM}. Then, for all $t\in J$, $\dot
  x(t)=\Pi_1(t,x(t),y(t))$ and, since $\big(x(t),y(t)\big)\in N$, we
  have $\ell\big(x(t),y(t)\big)=0$.

  Conversely, let $t\mapsto\big(x(t),y(t)\big)$ be a solution of
  \eqref{eq.phi.DAE}. Then, differentiating
  $\ell\big(x(t),y(t)\big)=0$ at any $t\in J$, one gets
  \begin{equation*}
    \partial_1\ell\big(x(t),y(t)\big)\dot x(t)+\partial_2
    \ell\big(x(t),y(t)\big)\dot y(t)=0.
  \end{equation*}
  So that
  \begin{multline*}
    \dot y(t)=-(\partial_2\ell\big(x(t),y(t)\big))^{-1}\partial_1
    \ell\big(x(t),y(t)\big)\dot x(t)\\
    =-(\partial_2\ell\big(x(t),y(t)\big))^{-1}\partial_1
    \ell\big(x(t),y(t)\big)\h\Pi_1\big(t,x(t),y(t)\big).
  \end{multline*}
  Hence, on account of \eqref{phi2},
  \[
  \dot y(t)=\h\Pi_2\big(t,x(t),y(t)\big).
  \]
  The assertion follows recalling that the solution meets $N$.
\end{proof}

Let us consider the particular case in which the tangent vector field
$\Pi\colon\R\X N \to \R^d$ is defined as
\begin{equation*}
  \Pi(t,\xi) =a(t)\Phi(\xi),\quad (t,\xi)\in \R\X N,
\end{equation*}
where $\Phi\colon N\to\R^d=\rkrs$ is a continuous tangent vector field
on $N=\ell^{-1}(0)$, with $\ell\colon U\to\R^s$ as above, and
$a\colon\R\to\R$ is a continuous map.  As in Remark \ref{remext0},
Tietze's Theorem implies the existence of a continuous extension
$\h\Phi\colon U\to\R^d$ of $\Phi$ with $\h\Phi|_{N}\equiv\Psi$.

Define the map $F \colon U \to\rkrs$, as follows
\begin{equation}\label{DSVeq:F}
  F(p, q) := \big({\h\Phi}_1 (p, q), \ell(p, q)\big),
\end{equation} 
where $\h\Phi\colon U\to\R^d$ is any extension of $\Phi$ to $U$ and
${\h\Phi}_1$ is its first $\R^k$-component.  The following result (see
e.g.\ \cite[Th.\ 4.1]{CaSp}), allow us to reduce
the computation of the degree of the tangent vector field $\Phi$ on
$N$ to that of the Brouwer degree of the map $F$ with respect to $0$,
which is in principle handier. Namely, we have that
\begin{theorem} \label{DSVeq:teo1} Let $U \subseteq \rkrs$ be open and
  connected, let $\Psi$ be as above, and $F\colon U\to\rkrs$ be given
  by \eqref{DSVeq:F}. Then, for any $V\subseteq U$ open, if either
  $\deg(\Phi, N\cap V)$ or $\deg(F, V)$ is well defined, so is the
  other, and
  \begin{equation*}
    |\deg (\Phi, N\cap V)| = |\deg (F, V)|.
  \end{equation*}
\end{theorem}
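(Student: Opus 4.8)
The plan is to identify the two admissibility conditions, reduce to the generic situation of nondegenerate zeros, and then carry out a local determinant computation at each such zero. First I would settle admissibility: since $\h\Phi$ extends $\Phi$, for $(p,q)\in V$ the equality $F(p,q)=0$ holds exactly when $\ell(p,q)=0$ and $\h\Phi_1(p,q)=0$; the first condition means $(p,q)\in N$, where $\h\Phi_1=\Phi_1$ and, by \eqref{phi2}, $\Phi_2$ is determined by $\Phi_1$, so that $\Phi(p,q)=0$ if and only if $\Phi_1(p,q)=0$. Hence $F^{-1}(0)\cap V=\Phi^{-1}(0)\cap(N\cap V)$, and one set is compact precisely when the other is, which already gives the first assertion. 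I would also record here that $\deg(F,V)$ does not depend on the chosen extension: any two extensions $\h\Phi,\h\Phi'$ of $\Phi$ are joined by the homotopy $\big((1-t)\h\Phi_1+t\h\Phi'_1,\,\ell\big)$, whose zero set inside $V$ equals $\Phi_1^{-1}(0)\cap N\cap V$ for every $t$, hence is admissible; the same remark holds on every open subset of $V$, which will let me compute locally with whichever extension is convenient.

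Next I would reduce to nondegenerate zeros. It is enough to treat $C^1$ tangent vector fields, since a continuous $\Phi$ can be $C^1$-approximated near the compact set $K:=\Phi^{-1}(0)\cap N\cap V$, with the perturbation kept tangent to $N$ and unchanged outside a small neighborhood of $K$; if the approximation is close enough the zeros of the straight-line homotopy stay in a fixed compact subset of $N\cap V$, so $\deg(\Phi,N\cap V)$ is unchanged, and $\deg(F,V)$ likewise by the extension-independence just noted. For a $C^1$ field, the standard transversality theorem (see e.g.\ \cite{milnor}) permits a further $C^1$-small perturbation, preserving both degrees, after which all zeros $\xi_1,\dots,\xi_m$ of $\Phi$ in $N\cap V$ are nondegenerate; then $\deg(\Phi,N\cap V)=\sum_j\mathrm{i}(\Phi,\xi_j)$, and by Excision and Additivity $\deg(F,V)=\sum_j\deg(F,V_j)$ for small pairwise disjoint neighborhoods $V_j\ni\xi_j$.

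The core step is the local computation at a nondegenerate zero $\xi_0=(p_0,q_0)$. By the Implicit Function Theorem $N$ near $\xi_0$ is the graph $\{(x,\gamma(x))\}$ of a smooth $\gamma$ with $\gamma(p_0)=q_0$; in this chart $\Phi$ is represented by $\psi(x):=\Phi_1(x,\gamma(x))$, because $D\gamma=-(\partial_2\ell)^{-1}\partial_1\ell$ by implicit differentiation, so $\Phi_2(x,\gamma(x))=D\gamma(x)\Phi_1(x,\gamma(x))$ by \eqref{phi2}, which is exactly the compatibility of $(\Phi_1,\Phi_2)$ with the chart; hence $\mathrm{i}(\Phi,\xi_0)=\sign\det D\psi(p_0)$. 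Computing $\deg(F,V_0)$ with the local, $y$-independent extension $\h\Phi_1(x,y):=\Phi_1(x,\gamma(x))$ (legitimate by the extension-independence above), one gets
\[
  DF(\xi_0)=\begin{pmatrix}D\psi(p_0)&0\\\partial_1\ell(\xi_0)&\partial_2\ell(\xi_0)\end{pmatrix},
\]
so $\xi_0$ is a nondegenerate zero of $F$ as well, and
\[
  \deg(F,V_0)=\sign\det DF(\xi_0)=\sign\det D\psi(p_0)\cdot\sign\det\partial_2\ell(\xi_0)=\varepsilon\,\mathrm{i}(\Phi,\xi_0),
\]
where $\varepsilon:=\sign\det\partial_2\ell$ is constant on the connected set $U$ since $\det\partial_2\ell$ never vanishes there. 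Summing over $j$ then gives $\deg(F,V)=\varepsilon\,\deg(\Phi,N\cap V)$, and taking absolute values finishes the proof.

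The hard part will be the bookkeeping in the first two steps rather than the final computation: making the extension-independence precise enough to justify switching extensions in a neighborhood of each zero, and choosing the generic perturbation so that it remains a tangent vector field on $N$ and is $C^1$-close enough to leave both degrees unchanged. The appearance of only the absolute values is forced by the sign $\varepsilon$, which cannot be normalized away because the degree of a tangent vector field carries no reference to an orientation of $N$.
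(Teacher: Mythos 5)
The paper does not prove Theorem~\ref{DSVeq:teo1}; it simply cites \cite[Th.~4.1]{CaSp}, so there is no internal argument to compare yours against. Your proof is correct and self-contained, and the structure is the natural one for this kind of statement. The identification $F^{-1}(0)\cap V=\Phi^{-1}(0)\cap N\cap V$ (which uses \eqref{phi2} on $N$) settles admissibility in one line; the convex homotopy $\bigl((1-t)\h\Phi_1+t\h\Phi'_1,\,\ell\bigr)$ indeed has a $t$-independent zero set inside $V$, so the Brouwer degree of $F$ is independent of the extension, and the same argument localizes to any open subset of $V$; the graph-chart computation gives the block lower-triangular Jacobian whose determinant factors as $\det D\psi(p_0)\cdot\det\partial_2\ell(\xi_0)$; and connectedness of $U$ pins down $\varepsilon=\sign\det\partial_2\ell$ as a global constant, which is exactly why only the absolute values can match (the degree of a tangent vector field, as the paper notes, is orientation-free). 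The genericity reduction is the only delicate point, and you correctly flag it: the $C^1$-approximation must be kept tangent to $N$, supported near the compact zero set, and small enough that the straight-line homotopies for both $\deg(\Phi,N\cap V)$ and $\deg(F,V)$ stay admissible. These are standard but nontrivial bookkeeping steps; your sketch of them is adequate and would survive being filled in.
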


\section{Poincar\'e-type translation
  operator}\label{SVADEs:section-poincare-trasl-op}

Let $N\subseteq \R^d$ be a manifold, and let $\Phi\colon N\to\R^d$ be
a tangent vector field on $N$.  Let $\Xi\colon\R\X N\X N \to \R^d$ be
continuous and tangent to $N$ in the second variable. Given $T > 0$,
assume also that $\Xi$ is $T$-periodic in $t$.  Consider the following
delay differential equation
\begin{equation}\label{dde-ro}
  \dot \zeta (t) = a(t)\Phi\big(\zeta(t)\big) +\lambda\Xi\big(t,\zeta(t),\zeta(t-r)\big), 
  \quad \lambda\geq 0,
\end{equation}
where $r>0$ and $a\colon\R\to\R$ is continuous, $T$-periodic and with
average $\avrg{a}=\frac{1}{T}\int_0^Ta(t)dt \ne 0$.  We are interested
in the $T$-periodic solutions of the above equation.  Without loss of
generality we can assume that $T\geq r$ (see, e.g.\
\cite{fuspa:1997}).  In fact, for $n \in \N$, equation \eqref{dde-ro}
and
\begin{equation*}
  \dot \zeta (t) = a(t)\Phi\big(\zeta(t)\big) +\lambda \Xi\Big(t,\zeta(t),\zeta\big(t-(r - n T)\big)\Big), 
  \quad \lambda\geq 0,
\end{equation*}
share the same $T$-periodic solutions (although other solutions may be
radically different). Thus, if necessary, one can replace $r$ with $r
-nT$, where $n \in N$ is chosen such that $0 < r -nT \leq T$.

Let us now establish some further notation.
Given any $p\in N$, denote by $\t p\in\t N$ the constant function $\t
p(t)\equiv p$, $t\in [-T, 0]$. Moreover, for any $V\subseteq N$, and
$W\subseteq\t N$ we define the sets
\begin{equation*}
  \su V:=\big\{\t p\in\t N\colon p\in V\big\},
\end{equation*}
and
\begin{equation*}
  \giu W:=\big\{p\in N\colon \t p\in W \big\}. 
\end{equation*}
Notice also that, for any given $V\subseteq N$, one has $\su
V\subseteq\t V$ and $\giu{(\t V)}=V$.  \smallskip

Proceeding as in \cite[\S \ 3]{fuspa:1997}, we now introduce a
Poincar\'e-type $T$-translation operator on an open subset of $\t
N$. Here, we assume that $\Phi$ is $C^1$.  Let $Q^{\Phi}_T$ be the map
defined, whenever it makes sense for $\phi\in\t N$, by
\[
Q^{\Phi}_T(\phi)(\theta) = \zeta\big(\phi(0), T + \theta\big),\quad
\theta\in [-r, 0],
\]
where $\zeta(p,\cdot)$ denotes the unique maximal solution of the
Cauchy problem
\begin{subequations} \label{ddeg}
  \begin{gather}
    \dot \zeta(t)  = \Phi(\zeta(t)), \label{nopert}\\
    \zeta(0) =p.
  \end{gather}
\end{subequations}
Well known properties of differential equations imply that the domain
$\dom{Q^{\Phi}_T}$ of $Q^{\Phi}_T$ is an open subset of $\t
N$. Moreover, since $T\geq r$, the Ascoli-Arzel\`a Theorem implies
that $Q^{\Phi}_T$ is a locally compact map (see, e.g.\ \cite{Ol69}).
Observe that, there is a simple relation between $\dom{Q^{\Phi}_T}$
and $\dom{P^{\Phi}_T}$, that is
\[
\dom{Q^{\Phi}_T}=\{\varphi\in\t M:\varphi(0)\in\dom{P^\Phi_T}\}.
\]
In particular, $\t{\dom{P^\Phi_T}}\subseteq\dom{Q^{\Phi}_T}$. Notice
also that $P^\Phi_T(p)=Q^\Phi_T(\t p)(0)$ for all
$p\in\dom{P^{\Phi}_T}$.

Similarly, given $a\colon\R\to\R$ as in \eqref{dde-ro}, we define the
map $Q^{a\Phi}_T$ by setting, whenever it makes sense for $\phi\in\t
N$,
\[
Q^{\Phi}_T(\phi)(\theta) = \xi\big(\phi(0), T + \theta\big),\quad
\theta\in [-r, 0],
\]
where $\xi(p, \cdot)$ is the unique maximal solution of the Cauchy
problem
\begin{subequations} \label{a-ddeg}
  \begin{gather}
    \dot \xi(t)  = a(t)\Phi\big(\xi(t)\big),\label{a-nopert} \\
    \xi(0) =p.
  \end{gather}
\end{subequations}
Since $\dom{P^{\Phi}_T}= \dom{P^{a\Phi}_T}$, it follows easily that
$\dom{Q^{\Phi}_T}=\dom{Q^{a\Phi}_T}$.  \smallskip

It is not difficult to prove that the $T$-periodic solutions of
\eqref{nopert} are in a one-to-one correspondence with the fixed
points of $Q^{\Phi}_T$. Similarly, the $T$-periodic solutions to
\eqref{a-nopert} are in a one-to-one correspondence with the fixed
points of $Q^{a\Phi}_T$. Moreover, if $\avrg{a}=1$, Proposition
\ref{SVDAEs:equivalence-T-res} imply that the fixed points of
$P^\Phi_T$ coincide with those of $P^{a\Phi}_T$.  However, even in
this case, $Q^{\Phi}_T$ might be different from $Q^{a\Phi}_T$.  We
wish to obtain a formula for the fixed point index of admissible pairs
$(Q^{a\Phi}_T,W)$, with $W$ open in $\dom{Q^{a\Phi}_T}$. In the case
when $a(t)\equiv 1$, we have the following result (\cite[Theorem
3.2]{FS09}).

\begin{theorem}\label{th2.1}
  Let $\Phi$ be as above and let $W\subseteq\t N$ be open and such
  that $\ind(Q^{\Phi}_T,W)$ is defined.  Then, $\deg(-\Phi,\giu W)$ is
  defined as well and
  \begin{equation}\label{findeg1}
    \ind(Q^{\Phi}_T,W)=\deg(-\Phi,\giu W).
  \end{equation}
\end{theorem}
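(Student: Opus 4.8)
\textbf{Proof plan for Theorem \ref{th2.1}.}

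The plan is to reduce the infinite-dimensional statement to the finite-dimensional formula relating the fixed point index of the Poincar\'e operator $P^{\Phi}_T$ to the degree of $-\Phi$. Recall from the preliminaries that $\fix(Q^{\Phi}_T)$ is in one-to-one correspondence with the $T$-periodic solutions of \eqref{nopert}, and that such a solution $\zeta$ is completely determined by its value $\zeta(0)$, which is a fixed point of $P^{\Phi}_T$. Thus the natural bijection is $\t p \mapsto p$ between $\fix(Q^{\Phi}_T)\cap W$ and a subset of $\fix(P^{\Phi}_T)$; the key point is that this subset is precisely $\fix(P^{\Phi}_T)\cap\giu W$. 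Indeed, if $p\in\giu W$ is a fixed point of $P^{\Phi}_T$, then $\t p\in W$ and, by the remark that $P^\Phi_T(p)=Q^\Phi_T(\t p)(0)$ together with uniqueness of solutions, $\t p$ is a fixed point of $Q^{\Phi}_T$. From this I would first deduce that $\fix(Q^{\Phi}_T)\cap W$ is compact (it is closed and, $Q^{\Phi}_T$ being locally compact with $\ind(Q^{\Phi}_T,W)$ defined, contained in a compact set), hence its homeomorphic image $\fix(P^{\Phi}_T)\cap\giu W$ is compact, so $\ind(P^{\Phi}_T,\giu W)$ is well defined; Proposition \ref{prop:iaphi=iphi} (or rather the cited formula relating $\ind(P^\Phi_T,\cdot)$ to $\deg(-\Phi,\cdot)$, valid here since $\avrg a=1$ is the trivial case $a\equiv1$) then gives that $\deg(-\Phi,\giu W)$ is defined.

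For the equality of the two integers, the idea is to set up an admissible homotopy inside $\t N$ that deformation-retracts $Q^{\Phi}_T$ onto the finite-dimensional operator, following the scheme of \cite{fuspa:1997}. Concretely, for $s\in[0,1]$ consider the family of maps $H_s\colon W'\to\t N$ defined, whenever it makes sense, by $H_s(\phi)(\theta)=\zeta(\phi(0), sT+\theta+(1-s)\cdot 0)$ suitably normalized, or more simply the homotopy $H_s(\phi)(\theta)=\zeta\big(\phi(0),(1-s)(T+\theta)+s\,T\big)$ which at $s=0$ is $Q^{\Phi}_T$ and at $s=1$ is the constant-in-$\theta$ map $\phi\mapsto\t{P^{\Phi}_T(\phi(0))}$, a map factoring through the finite-dimensional slice $\giu W\subseteq N$. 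One must check: (i) each $H_s$ is locally compact and the homotopy is admissible, i.e. the set of fixed points $\{\phi : H_s(\phi)=\phi\text{ for some }s\}$ stays compact — this uses $T\ge r$ and Ascoli--Arzel\`a exactly as in the definition of $Q^{\Phi}_T$, plus the fact that a fixed point of $H_s$ forces $\phi(0)$ to be a fixed point of $P^\Phi_T$ and $\phi$ to be a reparametrized piece of the corresponding periodic orbit, keeping the fixed-point set in bijection with $\fix(P^{\Phi}_T)\cap\giu W$ uniformly in $s$; (ii) homotopy invariance of the fixed point index then yields $\ind(Q^{\Phi}_T,W)=\ind(H_1,W)$. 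Finally, since $H_1$ has image in the finite-dimensional submanifold $N$ (embedded as constants) and is essentially $\t{(\cdot)}\circ P^{\Phi}_T\circ\mathrm{ev}_0$, the commutativity / contraction property of the fixed point index reduces $\ind(H_1,W)$ to $\ind(P^{\Phi}_T,\giu W)$, and the finite-dimensional formula $\ind(P^{\Phi}_T,\giu W)=\deg(-\Phi,\giu W)$ (the case $a\equiv 1$ of \eqref{DSVeq:iaphi=iphi}, going back to \cite{fupespa}) closes the argument.

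The main obstacle is step (i): verifying that the homotopy is \emph{admissible}, that is, that no fixed points escape to the boundary of $W$ as $s$ varies. The subtlety is that for intermediate $s$ the map $H_s$ is not itself a Poincar\'e operator of an autonomous equation, so one cannot directly invoke a known index formula; one must argue by hand that $\phi=H_s(\phi)$ implies $\phi(0)=\zeta(\phi(0),T)=P^\Phi_T(\phi(0))$ (evaluating the fixed-point relation appropriately, e.g. at $\theta=0$ and using $T$-periodicity/uniqueness to propagate), whence the fixed-point set of $H_s$ is independent of $s$ and equals (the image under $p\mapsto\t p$ of) $\fix(P^{\Phi}_T)\cap\giu W$, a compact subset of $W$. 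The local compactness of the whole homotopy, uniformly in $s\in[0,1]$, again follows from $T\ge r$ via Ascoli--Arzel\`a, since each $H_s(\phi)$ is a piece of a solution curve with derivative bounded on compacta. Once admissibility is secured, the remaining steps are formal applications of the standard properties of the fixed point index (homotopy invariance, commutativity/contraction to a finite-dimensional map) and of the already-cited degree formula.
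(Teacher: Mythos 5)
The paper does not prove Theorem~\ref{th2.1} directly (it is quoted from \cite{FS09}), but the proof of the generalization Theorem~\ref{thgrado} in the present paper reproduces the argument of \cite[Theorem~3.2]{FS09}: one factors $Q^{\Phi}_T = h\circ k$ and $P^{\Phi}_T = k\circ h$ through the evaluation map $k(\phi)=\phi(0)$ and the ``solution segment'' map $h(p)(\theta)=\zeta(p,\theta+T)$, invokes the \emph{Commutativity Property} of the fixed point index to get $\ind(Q^{\Phi}_T,W)=\ind(P^{\Phi}_T,h^{-1}(W))$, and finishes with the finite-dimensional formula $\ind(P^{\Phi}_T,\cdot)=\deg(-\Phi,\cdot)$ and excision. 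Your plan replaces the commutativity step by an explicit homotopy $H_s$ collapsing $Q^{\Phi}_T$ onto the constant-valued map $\phi\mapsto\t{P^{\Phi}_T(\phi(0))}$, and this is where the argument breaks.

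The key assertion in your step~(i) --- that ``the fixed-point set of $H_s$ is independent of $s$ and equals (the image under $p\mapsto\t p$ of) $\fix(P^{\Phi}_T)\cap\giu W$'' --- is false. If $\phi=H_s(\phi)$ then indeed $\phi(0)\in\fix(P^{\Phi}_T)$, but then $\phi(\theta)=\zeta\big(\phi(0),T+(1-s)\theta\big)=\zeta\big(\phi(0),(1-s)\theta\big)$ by $T$-periodicity, so for $0<s<1$ the fixed points are reparametrized, non-constant arcs of the $T$-periodic orbit through $\phi(0)$, not constant functions, and they genuinely move in $\t N$ as $s$ varies. The hypothesis ``$\ind(Q^{\Phi}_T,W)$ is defined'' only guarantees that $\fix(Q^{\Phi}_T)\cap W = \fix(H_0)\cap W$ is compact; it gives no control on $\fix(H_s)\cap W$ for $s\in(0,1)$, and nothing prevents such a moving arc from crossing $\partial W$ at some intermediate $s$. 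Thus the homotopy $H_s$ is not admissible in general, and homotopy invariance cannot be invoked without an additional excision/localization argument (which you would then have to justify, e.g.\ by showing one may shrink $W$ without changing the index so that it contains the whole sweep of arcs --- essentially re-proving what the Commutativity Property gives for free). The paper's route via commutativity is therefore not a matter of taste: it avoids precisely this admissibility problem. The rest of your outline (the one-to-one correspondence between $\fix(Q^{\Phi}_T)$ and periodic orbits, the compactness of $\Phi^{-1}(0)\cap\giu W$, and the final appeal to the finite-dimensional identity $\ind(P^{\Phi}_T,\cdot)=\deg(-\Phi,\cdot)$) is sound.
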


It is not difficult to see that, for any constant $c$ and any tangent
vector field $v$, admissible on an open $\mathcal{V}\subseteq N$, one
has
\begin{equation}\label{degmoltip}
  \deg(-c\,v,\mathcal{V})=(-\sign c)^{\dim N}\deg(v,\mathcal{V}).
\end{equation}
Hence, when $a(t)\equiv\avrg{a}$, Equation \eqref{findeg1} yields
\[
\ind(Q^{\avrg{a}\Phi}_T,W)=\deg(-\avrg{a}\,\Phi,\giu W)
=(-\sign\avrg{a})^{\dim N}\deg(\Phi,\giu W).
\]

We seek to generalize this formula to the case when $a$ is
nonconstant. The first part of our construction follows that of the
proof of Theorem 3.2 in \cite{FS09}.

\begin{theorem}\label{thgrado}
  Let $a$, $\Phi$ and $T$ be as in \eqref{dde-ro} and let
  $Q^{a\Phi}_T$ be as above. Let also $W\subseteq\t{N}$ be open. If
  the fixed point index $\ind(Q^{a\Phi}_T,W)$ is defined, then so is
  $\deg(\Phi,\giu W)$ and
  \begin{equation} \label{DSVeq:deg-deg}
    \ind(Q^{a\Phi}_T,W)=(-\sign\avrg{a})^{\dim N}\deg(\Phi,\giu W).
  \end{equation}
  In particular, one has that
  \begin{equation} \label{DSVeq:deg-deg-deg}
    \ind(Q^{a\Phi}_T,W)=(-\sign\avrg{a})^{\dim N}\ind(Q^{\Phi}_T, W).
  \end{equation}
\end{theorem}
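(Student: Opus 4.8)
The plan is to reduce the nonconstant-$a$ case to the constant-$a$ case by a normalization together with a homotopy argument, exploiting Theorem~\ref{th2.1} and Proposition~\ref{prop:iaphi=iphi}. First I would normalize: set $b(t)=a(t)/\avrg{a}$, so that $b$ is continuous, $T$-periodic, with average $1$, and observe that $a(t)\Phi=\avrg{a}\bigl(b(t)\Phi\bigr)$. The operator $Q^{a\Phi}_T$ is built from the flow of $\dot\zeta=a(t)\Phi(\zeta)$; rescaling time does not change the sign analysis, but it is cleaner to keep $a$ and instead absorb the sign of $\avrg{a}$ at the end. By Proposition~\ref{SVDAEs:equivalence-T-res} and the remark following the definition of $Q^{a\Phi}_T$, we already know $\dom{Q^{a\Phi}_T}=\dom{Q^{\Phi}_T}$ and that their fixed points coincide when $\avrg{a}=1$; so the two formulas \eqref{DSVeq:deg-deg} and \eqref{DSVeq:deg-deg-deg} will follow from each other once we have \eqref{DSVeq:deg-deg}, using \eqref{findeg1} applied to $-\Phi$ and \eqref{degmoltip}. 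Thus the real content is \eqref{DSVeq:deg-deg}.

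To prove \eqref{DSVeq:deg-deg} I would proceed in three steps. \emph{Step 1 (well-posedness of the degree).} Since $\ind(Q^{a\Phi}_T,W)$ is defined, the fixed point set of $Q^{a\Phi}_T$ in $W$ is compact; these fixed points correspond bijectively to $T$-periodic solutions of $\dot\zeta=a(t)\Phi(\zeta)$, hence to zeros of the tangent vector field $\Phi$ on $\giu W$ (a $T$-periodic solution of a separated-variables equation with $\avrg{a}\neq 0$ must be constant, by integrating over a period and using $\int_0^T a=\avrg{a}T\neq0$ against the tangential structure — more precisely, via Remark~\ref{SVDAEs:remark-on-ivp} the solution is a time-reparametrized solution of $\dot\zeta=\Phi(\zeta)$, and periodicity forces it to be an equilibrium). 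Compactness of the fixed point set then gives compactness of $\Phi^{-1}(0)\cap\giu W$, so $\deg(\Phi,\giu W)$ is defined. \emph{Step 2 (reduce to $\avrg{a}=1$ via the sign).} Here I use that the flow of $a(t)\Phi$ and that of $\bigl(\sign\avrg{a}\bigr)|a(t)|\Phi$ differ only by a global sign in the coefficient; running the homotopy $a_\tau(t)=(1-\tau)a(t)+\tau\,\avrg{a}$ (which keeps the average equal to $\avrg{a}$ throughout and keeps the coefficient nonvanishing in average) connects $Q^{a\Phi}_T$ to $Q^{\avrg{a}\Phi}_T$. The key point is that along this homotopy no fixed point escapes to the boundary of $W$: any fixed point is a constant solution, i.e. a zero of $\Phi$ in $\giu W$, and the zero set of $\Phi$ does not depend on $\tau$. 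Homotopy invariance of the fixed point index then yields $\ind(Q^{a\Phi}_T,W)=\ind(Q^{\avrg{a}\Phi}_T,W)$. \emph{Step 3 (apply the constant case).} By Theorem~\ref{th2.1} applied with the constant coefficient $\avrg{a}$ — more precisely to the field $\avrg{a}\Phi$ and its negative — together with \eqref{findeg1} and \eqref{degmoltip}, we get $\ind(Q^{\avrg{a}\Phi}_T,W)=\deg(-\avrg{a}\Phi,\giu W)=(-\sign\avrg{a})^{\dim N}\deg(\Phi,\giu W)$, which is exactly \eqref{DSVeq:deg-deg}. Finally \eqref{DSVeq:deg-deg-deg} follows by applying \eqref{findeg1} once more: $\deg(-\Phi,\giu W)=\ind(Q^{\Phi}_T,W)$ and $\deg(\Phi,\giu W)=(-1)^{\dim N}\deg(-\Phi,\giu W)$ by \eqref{degmoltip}.

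The step I expect to be the main obstacle is \emph{Step 2}: one must check carefully that the homotopy $a_\tau$ produces a well-defined, locally compact homotopy of operators $Q^{a_\tau\Phi}_T$ on (a neighborhood of $\overline{W}$ inside) a common open domain, and that the set of fixed points over $\tau\in[0,1]$ stays compact and inside $W$. The domain issue is the delicate part, because for a given $\varphi$ the solution of $\dot\zeta=a_\tau(t)\Phi(\zeta)$ starting at $\varphi(0)$ need not be defined up to time $T$ for every $\tau$; however, by Remark~\ref{SVDAEs:remark-on-ivp} the maximal existence interval is governed by the range of $t\mapsto\int_0^t a_\tau(s)\,ds$ inside the maximal interval for $\dot\zeta=\Phi(\zeta)$, and on the compact set of relevant initial data one can bound $\bigl|\int_0^t a_\tau(s)\,ds\bigr|$ uniformly in $\tau$ (each $a_\tau$ is a convex combination of $a$ and the constant $\avrg{a}$, so $\|a_\tau\|_\infty\le\|a\|_\infty$). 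This gives a common open neighborhood on which all $Q^{a_\tau\Phi}_T$ are defined, and the Ascoli--Arzelà argument (using $T\ge r$) gives equi-compactness, as in the proof of Theorem~3.2 in \cite{FS09}. Once this technical point is secured, the homotopy invariance of the fixed point index closes the argument.
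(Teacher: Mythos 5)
Your three-step outline takes a genuinely different route from the paper --- you try to pass from $Q^{a\Phi}_T$ to $Q^{\avrg{a}\Phi}_T$ by a direct homotopy $a_\tau=(1-\tau)a+\tau\avrg{a}$ in the coefficient, whereas the paper instead uses the Commutativity Property of the fixed point index to reduce $\ind(Q^{a\Phi}_T,W)$ to the finite-dimensional quantity $\ind\big(P^{a\Phi}_T,h^{-1}(W)\big)$, then invokes Proposition~\ref{prop:iaphi=iphi} and Excision, and finally handles a general $\avrg{a}$ by the purely algebraic rescaling $a_0=a/\avrg{a}$, $\Phi_a=\avrg{a}\Phi$, noting $Q^{a_0\Phi_a}_T=Q^{a\Phi}_T$. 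Unfortunately your route has a genuine gap.

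The load-bearing claim in your Step~2 (and already in Step~1) is that every fixed point of $Q^{a_\tau\Phi}_T$ is a constant function, i.e.\ that every $T$-periodic solution of $\dot\zeta=a_\tau(t)\Phi(\zeta)$ is an equilibrium. This is false. By Remark~\ref{SVDAEs:remark-on-ivp}, such a solution has the form $\zeta(t)=\eta\big(\int_0^t a_\tau(s)\,ds\big)$ where $\eta$ solves $\dot\eta=\Phi(\eta)$; $T$-periodicity of $\zeta$ forces $\eta$ to be $\avrg{a}T$-periodic, not constant. Thus if $\Phi$ has a nontrivial periodic orbit of period dividing $\avrg{a}T$, the corresponding $\varphi^p_\tau(\theta)=\xi_\tau(p,T+\theta)$ is a nonconstant fixed point of $Q^{a_\tau\Phi}_T$, and --- crucially --- it \emph{does} depend on $\tau$ even though $p$ does not. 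So your assertion that ``no fixed point escapes to the boundary of $W$ because the zero set of $\Phi$ does not depend on $\tau$'' is unjustified: the set $\{(\tau,\varphi)\in[0,1]\times W:Q^{a_\tau\Phi}_T(\varphi)=\varphi\}$ need not be compact just because it is at $\tau=0$, and nothing in your argument prevents a nonconstant fixed-point curve $\tau\mapsto\varphi^p_\tau$ from hitting $\partial W$. This is precisely the obstacle that the paper's commutativity argument sidesteps: it never homotopes in $a$ at the level of $\t N$; it transfers the index computation down to $N$, where the relevant admissible homotopy (inside Proposition~\ref{prop:iaphi=iphi}) lives.

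For the well-posedness part (your Step~1), you should not try to prove the (false) converse correspondence. The correct, and simpler, argument is one-sided: if $p\in\Phi^{-1}(0)\cap\giu W$ then $\t p$ is a fixed point of $Q^{a\Phi}_T$ lying in $W$, so $\Phi^{-1}(0)\cap\giu W$ embeds as a closed subset of the compact set $\fix(Q^{a\Phi}_T)\cap W$ and is therefore compact; no claim is needed about the other fixed points. Your Step~3 (the constant-coefficient case via Theorem~\ref{th2.1} and \eqref{degmoltip}) is fine and matches what the paper records just after \eqref{degmoltip}, but without a correct replacement for Step~2 the argument does not close.
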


\begin{proof}
  The assumption that $\ind(Q^{a\Phi}_T,W)$ is defined means that
  $W\subseteq\dom{Q^{a\Phi}_T}$ and that the fixed point set $\fix
  (Q^{a\Phi}_T)\cap W$ is compact. Let us show that $\deg(\Phi,\giu
  W)$ is defined too. We need to prove that $\Phi^{-1}(0)\cap\giu W$
  is compact. If $p\in\Phi^{-1}(0)\cap\giu W$, then the constant
  function $\t p$ is clearly a fixed point of $Q^{a\Phi}_T$. Thus
  $\Phi^{-1}(0)\cap\giu W$ is compact since it can be regarded as a
  closed subset of $\fix (Q^{a\Phi}_T)\cap W$.

  We now use the Commutativity Property of the fixed point index in
  order to obtain a relation between the indices of $P^{a\Phi}_T$ and
  $Q^{a\Phi}_T$. Define the maps $h:\dom{P^{a\Phi}_T}\to\t M$ and
  $k:\t M\to M$ by $h(p)(\theta)=\xi(p,\theta+T)$ and
  $k(\phi)=\phi(0)$, respectively.  Here, $\xi(p, \cdot)$ indicates
  the unique maximal solution of the Cauchy problem
  \eqref{a-ddeg}. One has that
  \begin{subequations}\label{compo}
    \begin{equation}\label{compo1}
      (h\circ k)(\phi)(\theta) =\xi\big(\phi(0),\theta+T\big)=Q^{a\Phi}_T(\phi)(\theta),
      \quad\phi\in\dom{Q^{a\Phi}_T},\quad \theta\in[-r,0],
    \end{equation}
    and
    \begin{equation}\label{compo2}
      (k\circ h)(p) =\xi(p,\theta+T)\vert_{\theta=0}= 
                     \xi(p,T)=P^{a\Phi}_T(p), \quad p\in\dom{P^{a\Phi}_T}.
    \end{equation}
  \end{subequations}
  Define $\gamma=k|_W$. As a consequence of the Commutativity Property
  of the fixed point index,
  $\ind\Big(h\circ\gamma,\gamma^{-1}\big(\dom{P^{a\Phi}_T}\big)\Big)$
  is defined if and only if $\ind\big(\gamma\circ h,h^{-1}(W)\big)$ is
  defined as well and, in this case,
  \begin{equation}\label{indcomm}
    \ind\Big(h\circ\gamma,\gamma^{-1}\big(\dom{P^{a\Phi}_T}\big)\Big)
    =\ind\big(\gamma\circ h,h^{-1}(W)\big).
  \end{equation}
  Moreover, since $W\subseteq\dom{Q^{a\Phi}_T}$, then
  $\gamma^{-1}\big(\dom{P^{a\Phi}_T}\big)=W$. Hence, from
  \eqref{compo}, it follows that
  \begin{subequations}\label{indQP}
    \begin{equation}
      \ind(Q^{a\Phi}_T,W)=\ind\Big(h\circ\gamma,
      \gamma^{-1}\big(\dom{P^{a\Phi}_T}\big)\Big),
    \end{equation}
    and
    \begin{equation}
      \ind\big(P^{a\Phi}_T,h^{-1}(W)\big)=
      \ind\big(\gamma\circ h,h^{-1}(W)\big).
    \end{equation}  
  \end{subequations}
  Recall that, according to remark \ref{SVDAEs:remark-on-ivp},
  $\mathcal{D}(P^{\Phi}_T)=\mathcal{D}(P^{a\Phi}_T)$ so that
  $h^{-1}(W)\subseteq \mathcal{D}(P^{\Phi}_T)$. Then by
  \eqref{indcomm} and \eqref{indQP} we get
  \begin{equation}\label{indQPh}
    \ind(Q^{a\Phi}_T,W)=\ind\big(P^{a\Phi}_T,h^{-1}(W)\big).
  \end{equation}

  Assume now in addition that the average $\avrg{a}$ of $a$ is equal
  to $1$. Proposition \ref{prop:iaphi=iphi} yields
  \begin{equation}\label{vecchiaf}
    \ind\big(P^{a\Phi}_T,h^{-1}(W)\big)=
    \ind\big(P^\Phi,h^{-1}(W)\big)=\deg\big(-\Phi,h^{-1}(W)\big).
  \end{equation}
  By the definition of $h$, one has that $\Phi^{-1}(0)\cap\giu
  W=\Phi^{-1}(0)\cap h^{-1}(W)$. In fact, all the constant solutions
  of \eqref{a-nopert} lie in $\giu W$.  Then, from the Excision
  Property of the degree of a tangent vector field, we get
  \begin{equation}\label{idovvia}
    \deg\big(-\Phi,h^{-1}(W)\big)= \deg(-\Phi,\giu W).
  \end{equation}
  Therefore, we get \eqref{DSVeq:deg-deg} by \eqref{indQPh},
  \eqref{vecchiaf} and \eqref{idovvia}.\smallskip

  Let us now remove the additional assumption on $a$. Let us put
  $a_0(t)=a(t)/\avrg{a}$ for all $t\in\R$
  and $\Phi_a(p)=\avrg{a}\,\Phi(p)$ for all $p\in N$. We rewrite
  equation \eqref{a-ddeg} as follows:
  \[
  \dot \xi(t) = a_0(t)\Phi_a\big(\xi(t)\big),
  \]
  and observe that $Q^{a_0\Phi_a}_T=Q^{a\Phi}_T$. Since the average of
  $a_0$ over $[0,T]$ is equal to $1$ we get, using the first part of
  the proof,
  \begin{equation}\label{genavgdeg}
    \ind(Q^{a\Phi}_T,W)=\deg(-\,\avrg{a}\,\Phi,\giu W).
  \end{equation}
  Since from \eqref{degmoltip} we have
  \[
  \deg(-\,\avrg{a}\,\Phi,\giu W)=(-\sign\avrg{a})^{\dim
    N}\deg(\Phi,\giu W),
  \]
  the assertion follows from \eqref{genavgdeg}.
\end{proof}

\section{Branches of starting pairs to \eqref{DSVeq:0}}

Any pair $(\lambda,\varphi)\in [0,\infty)\X\t N$ is said to be a
\emph{starting pair} for \eqref{DSVeq:0} if the following initial
value problem has a $T$-periodic solution:
\begin{equation}\label{due.uno}
  \left\{
    \begin{array}{ll}
      \dot \zeta(t)=a(t)\Phi\big(\zeta(t)\big)+\lambda 
      \Xi\big(t,\zeta(t),\zeta(t-r)\big), &  t>0,\\
      \zeta(t)=\varphi(t), & t\in [-r,0].
    \end{array}
  \right.
\end{equation}
A pair of the type $(0,\t p)$ with $\Phi(p)=0$ is clearly a starting
pair and will be called a \emph{trivial starting pair}.  The set of
all starting pairs for (\ref{DSVeq:0}) will be denoted by $S$.
For the reminder of this section we assume that $\Phi$ and $\Xi$ are $C^1$, so
that (\ref{due.uno}) admits a unique solution that we denote by
$\xi^\lambda(\varphi,\cdot)$.  By known continuous dependence
properties of delay differential equations the set
$\mathcal{V}\subseteq[0,\infty)\times\t N$ given by
\begin{equation*}
  \mathcal V:=\big\{(\lambda,\varphi)\in
  [0,\infty)\X\t N: \xi^\lambda(\varphi,\cdot)
  \textrm{ is defined on $[0,T]$}\big\}
\end{equation*}
is open. Clearly $\mathcal V$ contains the set $S$ of all starting
pairs for \eqref{DSVeq:0}.  Observe that $S$ is closed in $\mathcal
V$, even if it may be not so in $[0,+\infty)\times\t N$.  Moreover, by
the Ascoli-Arzel\`a Theorem it follows that $S$ is locally compact.

\smallskip It is convenient to introduce the following notation for
the ``slices'' of product spaces. Let $Y$ be a set. Given $X\subseteq
[0,\infty)\X Y$, we put $X_\lambda=\{\varphi\in Y:(\lambda,\varphi)\in
X\}$ for each $\lambda\geq 0$.

The following is our main result concerning starting pairs:

\begin{theorem}\label{T.3.1}
  Assume that $\Phi$, $\Xi$, $S$ are as above and let $a\colon\R\to\R$
  be continuous and $T$-periodic such that its average on a period is
  nonzero. Let $W\subseteq[0,\infty)\times\t N$ be open. If
  $\deg\big(\Phi,\giu{(W_0)}\big)$ is defined and nonzero, then the
  set
  \begin{equation*}
    (S\cap W)\setminus \big\{(0,\t p)\in W\colon \Phi(p)=0\big\}
  \end{equation*}
  of nontrivial starting pairs in $W$, admits a connected subset whose
  closure in $S\cap W$ meets $\big\{(0,\t p)\in W:\Phi(p)=0 \big\}$
  and is not compact.
\end{theorem}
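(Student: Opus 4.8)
The plan is to reduce Theorem~\ref{T.3.1} to an abstract continuation principle for locally compact maps depending on a parameter, applied to the family of $T$-translation operators attached to \eqref{due.uno}, and to feed into it the index computation furnished by Theorem~\ref{thgrado}. Concretely, for $(\lambda,\varphi)\in\mathcal V$ let $Q^{a\Phi+\lambda\Xi}_T(\varphi)(\theta)=\xi^\lambda(\varphi,T+\theta)$, $\theta\in[-r,0]$; this defines a continuous, locally compact map $\mathcal V\to\t N$ (local compactness again by Ascoli--Arzel\`a, since $T\ge r$), and its fixed points in the slice $\{\lambda\}\times\t N$ are exactly the $\varphi$ for which $\xi^\lambda(\varphi,\cdot)$ extends to a $T$-periodic solution, i.e.\ $S=\fix Q^{a\Phi+\lambda\Xi}_T$ inside $\mathcal V$. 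So $S$ is the fixed point set of a parametrized locally compact map, and what we want is a global connected branch of fixed points emanating from the trivial ones at $\lambda=0$. This is precisely the setting of the Mawhin-type / degree-theoretic continuation results for fixed point problems used in \cite{fuspa:1997,FS09,spaSepVar}: given an open $W\subseteq[0,\infty)\times\t N$ on which the relevant index at level $\lambda=0$ is nonzero and defined, the set of nontrivial fixed points in $W$ has a connected subset whose closure in $S\cap W$ is noncompact and meets the trivial set. I would invoke that abstract result (a connectivity lemma of the Leray--Schauder/Whyburn type, as in \cite{fuspa:1997}) as the engine.

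The key hypothesis to verify for that engine is that $\ind\big(Q^{a\Phi}_T,(W_0)^{\#}\big)$ — equivalently the fixed point index of the unperturbed ($\lambda=0$) translation operator on the open slice $W_0\subseteq\t N$ — is well defined and nonzero. Here is where the earlier machinery does all the work. First I would note that $(Q^{a\Phi}_T, W_0)$ is admissible: by Theorem~\ref{thgrado}'s hypothesis direction we actually argue the converse, namely that $\deg(\Phi,\giu{(W_0)})$ being defined and nonzero forces $\ind(Q^{a\Phi}_T,W_0)$ to be defined and nonzero — one checks compactness of $\fix(Q^{a\Phi}_T)\cap W_0$ using that the fixed points are $T$-periodic solutions of $\dot\zeta=a(t)\Phi(\zeta)$, whose constant ones are the zeros of $\Phi$, and a homotopy/a~priori bound argument rules out nonconstant ones escaping (or, more cleanly, one works on a slightly shrunk $W_0$ where admissibility is automatic and uses Excision). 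Then formula \eqref{DSVeq:deg-deg} gives $\ind(Q^{a\Phi}_T,W_0)=(-\sign\avrg a)^{\dim N}\deg(\Phi,\giu{(W_0)})$, which is nonzero exactly when $\deg(\Phi,\giu{(W_0)})\ne0$. That is the quantitative input the continuation principle needs at the boundary level $\lambda=0$.

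With the engine and the nonzero-index input in place, the remaining steps are bookkeeping: identify the trivial starting pairs in $W$ with (a subset of) $\giu{(W_0)}\cap\Phi^{-1}(0)$ viewed as constant functions, so that the "meets the trivial set'' conclusion of the abstract lemma translates into meeting $\{(0,\t p)\in W:\Phi(p)=0\}$; observe that $S$ is closed in $\mathcal V$ and locally compact (already stated in the excerpt) so that "closure in $S\cap W$'' makes sense and the noncompactness conclusion is exactly the one claimed; and finally dispense with the standing $C^1$ assumption — but here the theorem as stated already restricts to $\Phi,\Xi\in C^1$, so no approximation is needed at this stage (that is deferred to the continuous case elsewhere in the paper). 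The main obstacle, and the one place genuine care is required, is establishing admissibility of $(Q^{a\Phi}_T,W_0)$ — i.e.\ compactness of the unperturbed fixed-point set in the open slice $W_0$ — purely from "$\deg(\Phi,\giu{(W_0)})$ is defined'': one must argue that no sequence of $T$-periodic solutions of $\dot\zeta=a(t)\Phi(\zeta)$ in $W_0$ can converge to the topological boundary, which I would handle by the same closedness-in-$\mathcal V$ and local-compactness properties together with the observation that every such solution, being a fixed point of $Q^{a\Phi}_T$, has its orbit determined by $\varphi(0)$ and by Remark~\ref{SVDAEs:remark-on-ivp} is a time-reparametrization of a solution of $\dot\zeta=\Phi(\zeta)$, reducing the compactness question to one on $N$ already embedded in the definedness of $\deg(\Phi,\giu{(W_0)})$.
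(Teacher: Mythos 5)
Your high-level plan (reduce to a connectivity lemma and feed it the index formula from Theorem~\ref{thgrado}, via the parametrized translation operators $\mathcal Q_\lambda$) is the same as the paper's. But there is a genuine gap in the admissibility step, and it is exactly the place you yourself flag as requiring care. You assert that the hypothesis ``$\deg(\Phi,\giu{(W_0)})$ is defined and nonzero'' forces $\ind(Q^{a\Phi}_T,W_0)$ to be defined and nonzero. That implication is false. Definedness of $\deg(\Phi,\giu{(W_0)})$ means only that the set of \emph{zeros} of $\Phi$ in $\giu{(W_0)}$ (equivalently, constant fixed points of $Q^{a\Phi}_T$) is compact; it says nothing about \emph{nonconstant} $T$-periodic orbits of $\dot\zeta = a(t)\Phi(\zeta)$ lying in $W_0$, which are also fixed points of $Q^{a\Phi}_T$ and can cluster near $\partial W_0$ while staying away from $\Phi^{-1}(0)$. (Take $N=\R^2$, $\Phi(x,y)=(-y,x)$, $T=2\pi$, $a\equiv 1$, and $W_0$ the set of curves staying in a ball of radius $R$: $\deg(\Phi,B_R)=1$ is defined, yet $\fix(Q^\Phi_T)\cap W_0$ is an open, noncompact set of restrictions of circular orbits.) Your proposed reduction ``to one on $N$ already embedded in the definedness of $\deg(\Phi,\giu{(W_0)})$'' therefore does not hold: Remark~\ref{SVDAEs:remark-on-ivp} reparametrizes orbits in time, but periodic orbits do not correspond to zeros of $\Phi$, and the degree hypothesis gives you compactness only of the latter.

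The paper sidesteps this entirely by the structure of its contradiction argument. It applies the purely topological connectivity result (Lemma~\ref{L.3.1}), whose hypothesis is verified by contradiction: assuming there is a compact $C\subseteq S\cap U$ containing the trivial pairs with empty relative boundary, it takes a small metric neighborhood $A$ of $C$ (so that $S\cap U\cap A = C$). Admissibility of $(\mathcal Q_0, A_0) = (Q^{a\Phi}_T, A_0)$ is then \emph{automatic} because the fixed-point set in $A_0$ is a closed subset of the compact $C$; one never needs admissibility on $W_0$ itself. Generalized homotopy invariance on $A$, Theorem~\ref{thgrado} on $A_0$, and finally Excision relate $\deg(\Phi,\giu{(A_0)})$ back to $\deg(\Phi,\giu{(W_0)})\ne 0$, producing the contradiction. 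Your aside about ``working on a slightly shrunk $W_0$'' gestures in this direction but is not the same thing: the admissible open set $A_0$ in the paper is not a shrinking of $W_0$ but a neighborhood of the hypothetical compact $C$, and it is only available \emph{inside} the contradiction. Without that structural move, the index $\ind(Q^{a\Phi}_T,W_0)$ that your proposed continuation principle would require as input may simply fail to exist.
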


The proof of Theorem \ref{T.3.1} is based on the following global
connectivity result of \cite{FP93} and follows closely that of
Proposition 4.1 of \cite{FS09}. We adapt it here for the sake of
completeness.
\begin{lemma}\label{L.3.1}
  Let $Y$ be a locally compact metric space and let $Z$ be a compact
  subset of $Y$.  Assume that any compact subset of $Y$ containing $Z$
  has nonempty boundary. Then $Y\setminus Z$ contains a connected set
  whose closure (in $Y$) intersects $Z$ and is not compact.
\end{lemma}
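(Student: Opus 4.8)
The plan is to argue by contradiction after passing to the one-point compactification of $Y$, invoking two classical facts about continua in a compact Hausdorff space: a separation (Whyburn-type) lemma, and the boundary bumping theorem. First I would dispose of the trivial case: if $Y$ is compact, then $Y$ itself is a compact subset containing $Z$ with empty boundary, so the hypothesis is vacuous and there is nothing to prove. Hence I may assume $Y$ is non-compact and set $Y^{+}=Y\cup\{\omega\}$, its one-point compactification, which is compact Hausdorff (being $Y$ locally compact Hausdorff) and in which $Z$ and $\{\omega\}$ are disjoint compact sets. The only feature of $Y^{+}$ I will need, besides compactness, is that a subset $A\subseteq Y$ is compact and open in $Y$ precisely when $A$ is open-and-closed in $Y^{+}$ and $\omega\notin A$; such an $A$ has empty boundary in $Y$.

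Next comes the key dichotomy. Let $\mathcal{C}$ be the connected component of $\omega$ in $Y^{+}$. If $\mathcal{C}\cap Z=\emptyset$, then, using that in a compact Hausdorff space the component of a point coincides with its quasicomponent (the intersection of all open-and-closed sets containing it) together with the compactness of $Z$, I obtain an open-and-closed set $V\ni\omega$ with $V\cap Z=\emptyset$; its complement $A:=Y^{+}\setminus V$ is then a compact open subset of $Y$ containing $Z$ with empty boundary, contradicting the hypothesis. Therefore $\mathcal{C}\cap Z\neq\emptyset$, so there exists at least one subcontinuum of $Y^{+}$ containing $\omega$ and meeting $Z$. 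I would then shrink to a \emph{minimal} such continuum: Zorn's lemma applies, with reverse inclusion, to the family of subcontinua $K$ of $Y^{+}$ with $\omega\in K$ and $K\cap Z\neq\emptyset$, because a decreasing chain of these has for intersection again a subcontinuum (nested intersections of continua in a compact Hausdorff space are continua) which still meets the closed set $Z$ by the finite intersection property. Fix such a minimal $K$: no proper subcontinuum of $K$ both contains $\omega$ and meets $Z$.

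Let $\Gamma$ be the connected component of $\omega$ in the open subset $K\setminus Z$ of $K$ (meaningful since $\omega\notin Z$). Then $\Gamma$ is connected and $\Gamma\cap Z=\emptyset$; from the general fact that a component $\Gamma$ of an open set $V$ satisfies $\overline{\Gamma}\cap V=\Gamma$, I get $\overline{\Gamma}^{K}\setminus\Gamma\subseteq K\cap Z$, and the boundary bumping theorem — applied to the proper nonempty open set $K\setminus Z$ and its component $\Gamma$ — gives $\overline{\Gamma}^{K}\cap\mathrm{Bd}_{K}(K\setminus Z)\neq\emptyset$, where $\mathrm{Bd}_{K}(K\setminus Z)\subseteq K\cap Z$; hence $\overline{\Gamma}^{K}\cap Z\neq\emptyset$ and in particular $\Gamma\neq\{\omega\}$. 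Now put $\Gamma_{0}:=\Gamma\cap Y=\Gamma\setminus\{\omega\}\subseteq Y\setminus Z$. Since $\omega$ is not isolated in the connected set $\Gamma$, one checks $\overline{\Gamma_{0}}^{Y^{+}}=\overline{\Gamma}^{K}$, so that $\overline{\Gamma_{0}}^{Y}=\overline{\Gamma}^{K}\setminus\{\omega\}$ meets $Z$ and is non-compact (were it compact it would be closed in $Y^{+}$, making $\{\omega\}$ open-and-closed in the non-degenerate continuum $\overline{\Gamma}^{K}$).

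At this point $\Gamma_{0}$ has every property required by the conclusion except that it remains to verify that $\Gamma_{0}$ is connected, and I expect this to be the main obstacle. It is exactly here that the minimality (irreducibility) of $K$ must be used: if $\Gamma_{0}=P\sqcup Q$ with $P,Q$ nonempty and relatively open-and-closed, then, exploiting that $\overline{\Gamma}^{K}$ re-attaches to $K\setminus\overline{\Gamma}^{K}$ only along points of $Z$, one assembles from $\overline{P}^{K}$ (or $\overline{Q}^{K}$) together with the part of $K$ outside $\Gamma$ a \emph{proper} subcontinuum of $K$ still containing $\omega$ and meeting $Z$, contradicting minimality — this is essentially the structure theory of an irreducible continuum between a point and a closed set, and is the content of the cited connectivity lemma of \cite{FP93}, whose argument I would reproduce for the details. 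Once $\Gamma_{0}$ is known to be connected, it is the desired connected subset of $Y\setminus Z$ and the lemma follows.
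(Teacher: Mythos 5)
The paper itself offers no proof of Lemma~\ref{L.3.1}: it is quoted from \cite{FP93} and used as a black box, so there is no in-paper argument to compare yours against. Judged on its own, your proof is the classical one-point-compactification argument and nearly all of it is correct: the reduction to $Y$ noncompact, the identification of compact open subsets of $Y$ with clopen subsets of $Y^{+}$ missing $\omega$, the use of component $=$ quasicomponent in a compact Hausdorff space to manufacture the forbidden compact clopen set when the component of $\omega$ misses $Z$, the Zorn argument producing a minimal continuum $K\ni\omega$ meeting $Z$, the boundary-bumping step giving $\overline{\Gamma}^{K}\cap Z\neq\emptyset$, and the verification that $\overline{\Gamma_{0}}^{Y}$ meets $Z$ and is noncompact are all sound.

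The one genuine gap is the one you flag yourself: the connectedness of $\Gamma_{0}=\Gamma\setminus\{\omega\}$, which you settle by announcing that you ``would reproduce the argument of \cite{FP93}''. As written that is a citation, not a proof, and it is precisely the step for which the whole minimality apparatus was set up. It does close, and more simply than your sketch of ``re-attaching $\overline{P}^{K}$ to the part of $K$ outside $\Gamma$'' suggests. Suppose $\Gamma_{0}=P\sqcup Q$ with $P,Q$ nonempty and separated. Since $\Gamma$ is connected, $P\cup\{\omega\}$ and $Q\cup\{\omega\}$ are connected, hence $\overline{P\cup\{\omega\}}^{K}=\overline{P}^{K}\cup\{\omega\}$ and $\overline{Q}^{K}\cup\{\omega\}$ are subcontinua of $K$ containing $\omega$. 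Each is a \emph{proper} subcontinuum: since $\Gamma$ is closed in $K\setminus Z$ one has $\overline{P}^{K}\cap(K\setminus Z)=\overline{P}^{\Gamma}$, and the separation of $P$ and $Q$ gives $\overline{P}^{\Gamma}\cap Q=\emptyset$, so $\overline{P}^{K}\cup\{\omega\}$ misses the nonempty set $Q$ (and symmetrically). Minimality of $K$ then forces $\overline{P}^{K}\cap Z=\overline{Q}^{K}\cap Z=\emptyset$; but $\overline{\Gamma}^{K}=\overline{P}^{K}\cup\overline{Q}^{K}\cup\{\omega\}$ would then be disjoint from $Z$, contradicting the boundary-bumping step. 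Hence $\Gamma_{0}$ is connected, and with this paragraph inserted your proof is complete.
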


\begin{proof}[Proof of Theorem \ref{T.3.1}]
  Consider the open set $U=W\cap \mathcal V$. Since
  $\Phi^{-1}(0)\cap\giu{(U_0)}=\Phi^{-1}(0)\cap\giu{(W_0)}$, and
  $S\cap U=S\cap W$, we need to prove that the set of nontrivial
  starting pairs in $U$ admits a connected subset whose closure in
  $S\cap U$ meets the set $\big\{(0,\t p)\in U:\Phi(p)=0\big\}$ and is
  not compact. We deduce this fact from Lemma \ref{L.3.1} applied to
  the pair
  \begin{equation*}
    (Y,Z)=\Big(S\cap U,\, \big\{(0,\t p)\in U: p\in \Phi^{-1}(0)\big\} \Big).
  \end{equation*}
  In fact, if a connected set is as in Lemma \ref{L.3.1} then its
  closure satisfies all the requirements.

  Since $U$ is open and $S$ is locally compact $S\cap U$ is locally
  compact too. Moreover, the assumption that
  $\deg\big(\Phi,\giu{(W_0)}\big)$ is defined means that the set
  \begin{equation*}
    \big\{p\in\giu{(W_0)}: \Phi(p)=0\big\}=\big\{p\in\giu{(U_0)}:
    \Phi(p)=0\big\}
  \end{equation*}
  is compact. Thus the homeomorphic set $\{(0,\t p)\in U:\Phi(p)=0\}$
  is compact as well. Let us now prove that there exists no compact
  subset of $S\cap U$ containing $Z$ and with empty boundary in $S\cap
  U$.

  Assume by contradiction that such a set, call it $C$, exists. Then
  $C$ is relatively open in $S \cap U$ and $(S\cap U)\setminus C$ is
  closed in $S\cap U$. Hence, the distance $\delta=\dist\big(C,(S\cap
  U)\setminus C\big)$ between the compact set $C$ and $(S\cap
  U)\setminus C$ is nonzero. Define, for each $\lambda\geq 0$, the map 
$\Q_\lambda\colon\mathcal{V}_\lambda\to\t M$ given by
\begin{equation*}
  \Q_\lambda (\varphi)(\theta)= \xi^\lambda(\varphi,\theta+T),\quad 
  \theta\in[-r,0].
\end{equation*}
Notice that $\Q_0$ coincides with the map $Q^{a\Phi}_T$ defined in the
previous section. In fact, if $\zeta(p,\cdot)$ is the unique solution
of the Cauchy problem \eqref{a-ddeg}, then we have
$\xi^0\big(\varphi(0),\cdot\big)=\zeta\big(\varphi(0),\cdot\big)$.
Consider the set
  \begin{equation*}
    A=\left\{(\lambda,\varphi)\in U 
      :\dist\big((\lambda,\varphi),C\big)<\delta/2\right\},
  \end{equation*}
  which, clearly, does not meet $(S\cap U)\setminus C$. The
  compactness of $S\cap U\cap A=C$ imply that for some $\lambda_*>0$
  one has $\big(\{\lambda_*\}\X A_{\lambda_*}\big)\cap S\cap
  U=\emptyset$. So, since the set $S\cap U\cap A$ coincides with
  $\{(\lambda,\varphi)\in A:\Q_\lambda(\varphi)=\varphi\}$, the
  Generalized Homotopy Invariance Property of the fixed point index
  imply that
  \begin{equation*}
    0=\ind\big(\Q_{\lambda_*}, A_{\lambda_*}\big)=\ind\big(\Q_0,A_0\big),
  \end{equation*}
  Thus, as $Q^{a\Phi}_T=\Q_0$, Theorem \ref{thgrado} and the Excision
  Property of the degree yield
  \begin{equation*}
    0=\ind\big(\Q_0, A_0\big)=\ind(Q^{a\Phi}_T,A_0)=
    \left|\deg(\Phi,\giu{(A_0)})\right|
    =\left|\deg(\Phi,\giu{(W_0)})\right|,
  \end{equation*}
  against the assumption.
\end{proof}

\section{Branches of $T$-periodic pairs to
  \eqref{DSVeq:0}}\label{sec:rami}

In this section we focus on the $T$-periodic solutions to
\eqref{DSVeq:0}. In fact, we study the topological structure of the
set of pairs $(\lambda,x)\in [0,\infty)\X C_T(N)$ where $x$ is a
solution of this equation.  Our result extends that of \cite[\S
5]{FS09}

Recall that $\Phi\colon N\to\R^d$, $\Xi\colon\R\X N\X N\to\R^d$ and
$a\colon\R\to\R$ are continuous with $\Phi$ and $\Xi$ tangent to $N$
in the sense specified in the Introduction. We also assume that $a$
and $\Xi$ are $T$-periodic in $t$ and $a$ has nonzero average on a
period.

A pair $(\lambda,\zeta)\in [0,\infty)\X\ctnn$, where $\zeta$ a is
$T$-periodic solution of \eqref{DSVeq:0}, is called a
\emph{$T$-periodic pair}.  Those $T$-periodic pairs that are of the
particular form $(0,\cl{p})$ are said to be \emph{trivial}. Notice
that, since $a$ is not identically zero, $(0,\cl
p)\in[0,\infty)\X\ctnn$ is a trivial $T$-periodic pair if and only if
$\Phi(p)=0$.  We point out that if $\zeta$ is a nonconstant
$T$-periodic solution of the unperturbed equation $\dot
\zeta=a(t)\Phi(\zeta)$, then $(0,\zeta)$ is a nontrivial $T$-periodic
pair.

The following is our main result. Its proof follows closely that of
\cite[Thm.\ 5.1]{FS09} (which, in turn, is inspired to
\cite{FP93}). In fact, the only remarkable difference is related to
the use of Theorem \ref{T.3.1}. For the sake of completeness, however,
we restate the argument here in a slightly more schematic form.

\begin{theorem}\label{tuno}
  Let $a$, $\Phi$ and $\Xi$ as above. Let $\Omega\subseteq
  [0,\infty)\times \ctnn$ be open and such that $\deg(\Phi,\Omega\cap
  N)$ is defined and nonzero. Then $\Omega$ contains a connected set
  of nontrivial $T$-periodic pairs for \eqref{DSVeq:0} whose closure
  in $\Omega$ meets the set $\{(0,\cl p)\in\Omega:\Phi(p)=0\}$ and is
  not compact. In particular, the set of $T$-periodic pairs for
  \eqref{DSVeq:0} contains a connected component that meets $\{(0,\cl
  p)\in\Omega:\Phi(p)=0\}$ and whose intersection with $\Omega$ is not
  compact.
\end{theorem}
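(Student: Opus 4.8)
\textbf{Proof plan for Theorem \ref{tuno}.}
The plan is to transfer the conclusion of Theorem \ref{T.3.1}, which lives in the space of starting pairs $(\lambda,\varphi)\in[0,\infty)\X\t N$, to the space of $T$-periodic pairs $(\lambda,\zeta)\in[0,\infty)\X\ctnn$, via the natural ``restriction/initial-condition'' map that sends a $T$-periodic solution $\zeta$ to the element $\varphi\in\t N$ obtained by restricting $\zeta$ to $[-r,0]$. First I would set up this correspondence carefully: given a $T$-periodic pair $(\lambda,\zeta)$, the function $\varphi=\zeta|_{[-r,0]}$ is a starting pair (the $T$-periodicity forces the solution of \eqref{due.uno} to agree with $\zeta$, hence to be $T$-periodic), and conversely a starting pair $(\lambda,\varphi)$ gives, by $T$-periodic extension of $\xi^\lambda(\varphi,\cdot)$, a $T$-periodic pair. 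One checks this map is a homeomorphism between $S$ (with the topology it inherits from $\mathcal V$) and the set of $T$-periodic pairs, carrying trivial starting pairs $(0,\t p)$, $\Phi(p)=0$, exactly to trivial $T$-periodic pairs $(0,\cl p)$, $\Phi(p)=0$, and identifying the slices at $\lambda=0$ correctly so that $\giu{(W_0)}$ corresponds to $\Omega\cap N$ under the convention \eqref{DSVeq:convention-constan-func}.

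Next I would choose the open set $W\subseteq[0,\infty)\X\t N$ to which Theorem \ref{T.3.1} is applied. The naive choice, the preimage of $\Omega$ under the restriction map, is not quite right because the restriction map is only defined on $\mathcal V$, and moreover one must ensure that the hypothesis $\deg\big(\Phi,\giu{(W_0)}\big)$ defined and nonzero follows from the hypothesis that $\deg(\Phi,\Omega\cap N)$ is defined and nonzero. Here I would use $\giu{(W_0)}=\Omega\cap N$ (granted by the identification above) so that the two degree hypotheses literally coincide. A subtlety I expect to be the main obstacle is the \emph{properness/closedness} issue: the conclusion of Theorem \ref{T.3.1} produces a connected set whose closure in $S\cap W$ (not in the bigger ambient space) is noncompact and meets the trivial set; to conclude noncompactness of the closure \emph{in $\Omega$}, I must rule out that the branch escapes to the ``boundary'' $\mathcal V\setminus S$ — i.e.\ to pairs $(\lambda,\varphi)$ for which the solution blows up before time $T$. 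This is handled exactly as in \cite[Thm.\ 5.1]{FS09}: one argues that along the branch, if its closure in $\Omega$ were compact, then by the a priori bound coming from compactness all solutions $\xi^\lambda(\varphi,\cdot)$ would stay in a fixed compact subset of $N$ and hence be continuable to $[0,T]$, placing the limit inside $S\cap W$ and contradicting the noncompactness of the closure there.

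Concretely, the order of steps is: (1) establish the homeomorphism between $S\cap\mathcal V$-branches and $T$-periodic-pair branches, and the matching of trivial pairs and of the $\lambda=0$ slices; (2) set $W$ to be (the pullback to $\mathcal V$ of) $\Omega$, verify $W$ open and $\giu{(W_0)}=\Omega\cap N$, so the degree hypothesis transfers; (3) invoke Theorem \ref{T.3.1} to get a connected set $\Gamma\subseteq S\cap W$ of nontrivial starting pairs with $\cl{\Gamma}$ (closure in $S\cap W$) noncompact and meeting the trivial set; (4) push $\Gamma$ forward to a connected set $\h\Gamma$ of nontrivial $T$-periodic pairs in $\Omega$, and show its closure in $\Omega$ still meets $\{(0,\cl p)\in\Omega:\Phi(p)=0\}$ and is still noncompact — the latter being the delicate point, resolved by the continuation argument above that forbids escape through $\mathcal V\setminus S$. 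Finally, (5) the ``in particular'' clause follows since a connected set is contained in a connected component of the set of all $T$-periodic pairs, and that component then inherits the two properties (meeting the trivial set, noncompact intersection with $\Omega$). I expect step (4)'s noncompactness transfer to be where essentially all the real content lies; the rest is bookkeeping with the identifications and with the already-established Theorem \ref{T.3.1}.
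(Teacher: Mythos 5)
Your plan correctly reconstructs the $C^1$ case, which is essentially the paper's Lemma~\ref{lemmatuno}: set up the bijection $h\colon X\to S$, $h(\lambda,\zeta)=(\lambda,\zeta|_{[-r,0]})$, pull $\Omega$ back to an open $W$ with $\giu{(W_0)}=\Omega\cap N$, apply Theorem~\ref{T.3.1}, and push the branch back. Two remarks on that part. First, the delicate point you anticipate in your step~(4) --- that the branch might ``escape'' through $\mathcal V\setminus S$ --- is in fact a non-issue and the paper dispatches it in one line: since $X$ (the set of $T$-periodic pairs) is \emph{closed} in $[0,\infty)\times\ctnn$ by continuous dependence, the closure of $\Gamma=h^{-1}(\Sigma)$ in $X\cap\Omega$ already coincides with its closure in $\Omega$, so noncompactness transfers automatically; no continuation/a-priori-bound argument is required. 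Your proposed workaround is not wrong, but it is heavier than what the structure of the problem gives you for free.

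The genuine gap is that your proof only establishes the statement under the extra hypothesis that $\Phi$ and $\Xi$ are $C^1$, whereas Theorem~\ref{tuno} assumes them merely continuous. Your step~(1) crucially uses \emph{uniqueness} for the initial value problem~\eqref{due.uno} --- both to make $h$ injective (so $h^{-1}$ exists and is continuous) and in the parenthetical ``the $T$-periodicity forces the solution of~\eqref{due.uno} to agree with $\zeta$.'' Without $C^1$ (or at least local Lipschitz) regularity, the IVP can have multiple solutions, $h$ is onto but not one-to-one, and the entire correspondence between branches of starting pairs and branches of $T$-periodic pairs breaks down. Indeed Theorem~\ref{T.3.1} itself is stated only under the standing $C^1$ assumption of that section. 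The paper closes this gap by isolating the $C^1$ case as Lemma~\ref{lemmatuno} and then proving Theorem~\ref{tuno} by an approximation argument: take $C^1$ tangent fields $\Phi_i\to\Phi$, $\Xi_i\to\Xi$, run the contradiction scheme of Lemma~\ref{L.3.1} on a set $A^\rho$, apply Lemma~\ref{lemmatuno} to each approximating equation to produce periodic pairs on $\partial A^\rho$, and pass to the limit with Ascoli--Arzel\`a to get a $T$-periodic pair of~\eqref{DSVeq:0} on $\partial A^\rho$, contradicting $\partial A^\rho\cap Y=\emptyset$. That approximation step is where the real content of Theorem~\ref{tuno} over Lemma~\ref{lemmatuno} lies, and your proposal does not address it.
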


The following lemma takes care of a special case.
\begin{lemma}\label{lemmatuno}
  Let $a$, $\Phi$, $\Xi$ and $\Omega$ be as in Theorem
  \ref{tuno}. Assume in addition that $\Phi$ and $\Xi$ are $C^1$. Then
  $\Omega$ contains a connected set of nontrivial $T$-periodic pairs
  for \eqref{DSVeq:0} whose closure in $\Omega$ meets the set
  $\{(0,\cl p)\in\Omega:\Phi(p)=0\}$ and is not compact.
\end{lemma}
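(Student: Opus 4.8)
The plan is to deduce Lemma~\ref{lemmatuno} from Theorem~\ref{T.3.1} by passing from the space of initial data $\t N$ to the space $\ctnn$ of $T$-periodic functions. The natural bridge is the map that sends a starting pair $(\lambda,\varphi)$, for which the solution $\xi^\lambda(\varphi,\cdot)$ happens to be $T$-periodic, to the $T$-periodic pair $(\lambda,\zeta)$ where $\zeta$ is the periodic extension of that solution; conversely, restricting a $T$-periodic solution to $[-r,0]$ produces a fixed starting pair. Concretely, I would introduce the open set $\mathcal V\subseteq[0,\infty)\X\t N$ of pairs whose solution is defined on $[0,T]$ (as in Section~5), the operator $\Q_\lambda(\varphi)(\theta)=\xi^\lambda(\varphi,\theta+T)$, and note that the set of starting pairs $S$ consists exactly of the $(\lambda,\varphi)\in\mathcal V$ with $\Q_\lambda(\varphi)=\varphi$. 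There is a continuous map $j\colon S\to[0,\infty)\X\ctnn$ carrying $(\lambda,\varphi)$ to $(\lambda,\h\varphi)$, where $\h\varphi$ is the (unique) $T$-periodic function agreeing with $\xi^\lambda(\varphi,\cdot)$ on $[0,T]$; by uniqueness of solutions of \eqref{due.uno} this map is injective, its image is precisely the set of $T$-periodic pairs for \eqref{DSVeq:0}, and it restricts to a homeomorphism between $S$ (with its topology as a subspace of $\mathcal V$) and the set of $T$-periodic pairs, with inverse given by restriction to $[-r,0]$ followed by the inclusion of that function as an element of $\t N$. Trivial starting pairs correspond to trivial $T$-periodic pairs, and $j$ carries $\{(0,\t p):\Phi(p)=0\}$ onto $\{(0,\cl p):\Phi(p)=0\}$.

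Next I would set $W=j^{-1}(\Omega)\subseteq\mathcal V$, which is open in $\mathcal V$ hence (being a subset of the open set $\mathcal V\subseteq[0,\infty)\X\t N$) open in $[0,\infty)\X\t N$. The key compatibility to check is $\giu{(W_0)}=\Omega\cap N$: indeed $W_0$ is the set of $\varphi\in\t N$ with $\xi^0(\varphi,\cdot)$ defined on $[0,T]$ and $(0,\h\varphi)\in\Omega$, and since a constant function $\t p$ is mapped by $j$ to $(0,\cl p)$, the set $\giu{(W_0)}$ of constants in $W_0$ is exactly $\{p\in N:(0,\cl p)\in\Omega\}=\Omega\cap N$. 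Hence $\deg\big(\Phi,\giu{(W_0)}\big)=\deg(\Phi,\Omega\cap N)$ is defined and nonzero, and Theorem~\ref{T.3.1} furnishes a connected subset $\Gamma$ of $(S\cap W)\setminus\{(0,\t p)\in W:\Phi(p)=0\}$ whose closure in $S\cap W$ meets $\{(0,\t p)\in W:\Phi(p)=0\}$ and is not compact. Applying the homeomorphism $j$, the set $j(\Gamma)$ is a connected set of nontrivial $T$-periodic pairs contained in $\Omega$; its closure in $\Omega$ is $j$ of the closure of $\Gamma$ in $S\cap W$ (because $j$ is a homeomorphism of $S\cap W$ onto the set of $T$-periodic pairs intersected with $\Omega$, which is the relative closure of that set in $\Omega$), so it meets $\{(0,\cl p)\in\Omega:\Phi(p)=0\}$ and is not compact. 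This is exactly the conclusion of the lemma.

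The step I expect to be the main obstacle is verifying that $j$ is genuinely a homeomorphism onto its image with the stated topologies, and in particular that closures and non-compactness transfer correctly — i.e. that the closure of $j(\Gamma)$ in $\Omega$ corresponds under $j^{-1}$ to the closure of $\Gamma$ in $S\cap W$, not in some larger space. This requires knowing that a sequence $(\lambda_n,\h\varphi_n)$ of $T$-periodic pairs in $\Omega$ that converges in $\ctnn$ to a $T$-periodic pair $(\lambda,\zeta)\in\Omega$ pulls back to a convergent sequence $(\lambda_n,\varphi_n)\to(\lambda,\varphi)$ in $S\cap W$, which follows from the fact that uniform convergence on $\R$ of $T$-periodic functions is equivalent to uniform convergence on $[-r,0]$ for functions that are restrictions of $T$-periodic ones, together with continuous dependence of solutions of \eqref{due.uno} on $(\lambda,\varphi)$; conversely, convergence in $\t N$ of starting pairs forces uniform convergence of the corresponding solutions on $[0,T]$ by continuous dependence, hence convergence of their periodic extensions in $\ctnn$. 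Once this correspondence is nailed down, the remaining bookkeeping — that non-trivial maps to non-trivial, that the distinguished fibers match, and that "not compact" is preserved under a homeomorphism — is routine. An alternative, if one prefers to avoid explicitly constructing $j$, is to repeat the argument of Theorem~\ref{T.3.1} verbatim in $\ctnn$, using the translation operator on $C_T(N)$ directly and invoking Theorem~\ref{thgrado} at the end; but reusing Theorem~\ref{T.3.1} through $j$ is cleaner and is the route suggested by the sentence preceding the lemma.
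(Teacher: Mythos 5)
Your overall strategy matches the paper's: both proofs transport the conclusion of Theorem~\ref{T.3.1} to the space $[0,\infty)\times C_T(N)$ via the bijection between starting pairs and $T$-periodic pairs (the paper's map $h$ is the inverse of your $j$), observing that this bijection is a homeomorphism when $\Phi$ and $\Xi$ are $C^1$, that it matches trivial pairs with trivial pairs, and that noncompactness of closures transfers. However, your choice of $W$ creates a genuine gap.

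You set $W=j^{-1}(\Omega)$ and assert that it is open in $\mathcal V$, hence open in $[0,\infty)\times\t N$. But $j$ is defined only on $S$, so $j^{-1}(\Omega)$ is a subset of $S$ that is open \emph{in $S$}, not in $\mathcal V$; since $S$ is closed (not open) in $\mathcal V$, an open subset of $S$ need not be open in $\mathcal V$. This matters twice: Theorem~\ref{T.3.1} requires $W$ to be open in $[0,\infty)\times\t N$, and the degree $\deg\big(\Phi,\giu{(W_0)}\big)$ is only defined when $\giu{(W_0)}$ is open in $N$. The correct move, which the paper makes, is to observe that $j^{-1}(\Omega)$ is open in $S$ and therefore to \emph{choose} some open $W\subseteq[0,\infty)\times\t N$ with $S\cap W=j^{-1}(\Omega)$ (such a $W$ exists by the definition of the subspace topology but is by no means canonical). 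A second, related problem is the claimed identity $\giu{(W_0)}=\Omega\cap N$. With $W\subseteq S$ as you defined it, $\t p\in W_0$ forces $(0,\t p)\in S$, which (since $a$ has nonzero average) forces $\Phi(p)=0$; so your $\giu{(W_0)}$ is contained in $\Phi^{-1}(0)$ and is certainly not all of $\Omega\cap N$ in general. Even with the corrected choice of $W$ this set identity will fail; what \emph{does} hold, and what the paper actually proves, is the equality of zero sets
\[
\big\{p\in\giu{(W_0)}:\Phi(p)=0\big\}=\big\{p\in\Omega\cap N:\Phi(p)=0\big\},
\]
using that $(0,\t p)$ with $\Phi(p)=0$ lies in $W$ exactly when $(0,\cl p)\in\Omega$. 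From this one concludes $\deg\big(\Phi,\giu{(W_0)}\big)=\deg(\Phi,\Omega\cap N)$ by the Excision property of the degree, which is weaker than, and replaces, your set-theoretic claim. With these two corrections your argument coincides with the paper's.
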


\begin{proof}
  Denote by $X$ the set of $T$-periodic pairs of \eqref{DSVeq:0} and
  by $S$ the set of starting pairs of the same equation. Define the
  map $h \colon X\to S$ by
  $h(\lambda,\zeta)=\big(\lambda,\zeta|_{[-r,0]}\big)$ and observe
  that $h$ is continuous and onto. Since $\Phi$ and $\Xi$ are $C^1$,
  then $h$ is also one to one.  Observe that, the trivial solution
  pairs correspond to trivial starting points under this
  homeomorphism.  Moreover, by continuous dependence on data,
  $h^{-1}:S\to X$ is continuous as well. Consider the set
  \begin{equation*}
    S_\Omega=\big\{ (\lambda ,\varphi)\in S:\mbox{the solution of \eqref{DSVeq:0} is contained in }\Omega\big\}
  \end{equation*}
  so that $X\cap\Omega$ and $S_\Omega$ correspond under the
  transformation $h:X\to S$. Thus, $S_\Omega$ is an open subset of
  $S$. Consequently, we can find an open subset $W$ of
  $[0,\infty)\times\t N$ such that $S\cap W=S_\Omega$. This implies
  the following chain of equalities:
  \begin{multline*}
    \big\{ p\in\giu{ (W_0)} \colon \Phi(p)=0\big\}=
    \big\{p\in N:(0,\t p)\in W,\;\Phi(p)=0\big\}=\\
    =\big\{p\in
    N:(0,\cl{p})\in\Omega,\;\Phi(p)=0\big\}=\big\{p\in\Omega\cap
    N:\Phi(p)=0\big\}.
  \end{multline*}
  Hence, by excision, it follows that
  $\deg(\Phi,\giu{(W_0)})=\deg(\Phi,\Omega\cap N)\neq 0$. By appealing
  to Theorem \ref{T.3.1}, we find that there exists a connected set
  \begin{equation*}
    \Sigma\subseteq ( S\cap W)\setminus\big\{(0,\t p)\in W:\Phi(p)=0\big\}
  \end{equation*}
  whose closure in $S\cap W$ meets $\big\{(0,\t p)\in
  W:\Phi(p)=0\big\}$ and is not compact.

  The set $\Gamma = h^{-1}(\Sigma)\subseteq X\cap\Omega$ is a
  connected set of nontrivial $T$-periodic pairs whose closure in
  $X\cap\Omega$ meets $\{(0,\cl p)\in\Omega:\Phi(p)=0\}$ and is not
  compact. Since $X$ is closed in $[0,\infty)\times \ctnn$, the
  closures of $\Gamma$ in $X\cap\Omega$ and in $\Omega$ coincide.
  Therefore $\Gamma$ satisfies the requirements.
\end{proof}

The proof of Theorem \ref{tuno} can be now performed through an
approximation procedure.

\begin{proof}[Proof of Theorem \ref{tuno}]
  As in the last part of the proof of Lemma \ref{lemmatuno}, it is
  enough to show the existence of a connected set $\Gamma$ of
  nontrivial $T$-periodic pairs whose closure in $X\cap\Omega$ meets
  $\{(0,\cl p)\in\Omega:\Phi(p)=0\}$ and is not compact.

  Observe that the closed subset $X$ of $[0,\infty)\X\ctnn$ is locally
  compact because of Ascoli-Arzel\`a Theorem. It is convenient to
  introduce the following subset of $X$:
  \[
  \Upsilon=\big\{(0,\cl{p})\in [0,\infty)\X\ctnn:\Phi(p)=0\big\}.
  \]
  Take $Y=X\cap\Omega$ and $Z=\Upsilon\cap\Omega$ and notice that $Y$
  is locally compact as an open subset of $X$.  Moreover, $Z$ is a
  compact subset of $Y$ (recall that, by assumption,
  $\deg(\Psi,N\cap\Omega)$ is defined).  Since $Y$ is closed in
  $\Omega$, we only have to prove that the pair $(Y,Z)$ satisfies the
  hypothesis of Lemma \ref{L.3.1}. Assume the contrary. Thus, we can
  find a relatively open compact subset $C$ of $Y$ containing $Z$.
  Similarly to the proof of Proposition \ref{T.3.1}, given
  $0<\rho<\dist (C,Y\setminus C)$, we consider the set $A^\rho$ of all
  pairs $(\lambda,\varphi)\in\Omega$ whose distance from $C$ is
  smaller than $\rho$. Thus, $A^\rho\cap Y=C$ and $\partial A^\rho\cap
  Y=\emptyset$. We can also assume that the closure $\cl{A^\rho}$ of
  $A^\rho$ in $[0,\infty)\X\ctnn$ is contained in $\Omega$. Since $C$
  is compact and $[0,\infty)\times N$ is locally compact, we can take
  $A^\rho$ in such a way that the set
  \[
  \left\{ \big(\lambda,x(t),x(t-r)\big) \in [ 0,\infty)\times N\X
    N: (\lambda ,x)\in A^\rho,\; t\in [0,T]\right\}
  \]
  is contained in a compact subset of $[0,\infty)\X N\X N$. This
  implies that $A^\rho$ is bounded with complete closure and
  $A^\rho\cap N$ is a relatively compact subset of $\Omega\cap N$. In
  particular $\Phi$ is nonzero on the boundary of $A^\rho\cap N$
  (relative to $N$). Known approximation results on manifolds yield
  sequences $\{\Phi_i\}_{i\in\N}$ and $\{ \Xi_i\}_{i\in\N}$ of $C^1$ maps uniformly
  approximating $\Phi$ and $\Xi$, respectively, and such that for all $i\in\N$ we
  have
  \begin{list}{---}{\setlength{\itemsep}{3pt}%
      \setlength{\topsep}{3pt}%
      \setlength{\partopsep}{0pt}%
      \setlength{\parsep}{0pt}%
      \setlength{\parskip}{0pt}}
  \item[({\it a})] $\Phi_i(p)\in T_pN$ for all $p\in N$;
  \item[({\it b})] $\Xi_i(t,p,q)\in T_pN$ for all $(t,p,q)\in\R\X N\X
    N$;
  \item[({\it c})] $\Xi_i$ is $T$-periodic in the first variable.
  \end{list}
  Thus, for $i\in\N$ large enough, we get
  \[
  \deg (\Phi_i,A^\rho\cap N)=\deg (\Phi,A^\rho\cap N).
  \]
  Furthermore, by excision,
  \[
  \deg (\Phi,A^\rho\cap N)=\deg (\Phi,\Omega \cap N)\neq 0.
  \]
  Therefore, given $i$ large enough, Lemma \ref{lemmatuno} can be
  applied to the equation
  \begin{equation}\label{due}
    \dot x(t)=a(t)\Phi_i\big(x(t)\big)+\lambda\Xi_i\big(t,x(t),x(t-r)\big). 
  \end{equation}
  Let $X_i$ denote the set of $T$-periodic pairs of \eqref{due} and
  put
  \[
  \Upsilon_i=\big\{(0,\cl{p})\in [0,\infty)\times
  \ctnn:\Phi_i(p)=0\big\}.
  \]
  Lemma \ref{lemmatuno} yields a connected subset $\Gamma_i$ of
  $A^\rho$ whose closure in $A^\rho$ meets $\Upsilon_i\cap A^\rho$ and
  is not compact. Let us denote by $\cl\Gamma_i$ and $\cl{A^\rho}$ the
  closures in $[0,\infty)\times\ctnn$ of $\Gamma_i$ and $A^\rho$,
  respectively.

  We claim that, for $i$ large enough, $\cl\Gamma_i\cap\D
  A^\rho\neq\emptyset$. Thus, $X_i$ being closed, we get
  $\cl{\Gamma}_i\subseteq X_i$ and this implies the existence of a
  $T$-periodic pair $(\lambda_i,x_i)\in\D A^\rho$ of \eqref{due}.

  Notice that proving the claim boils down to showing that
  $\cl\Gamma_i$ is compact. In fact, if the latter assertion is true
  and we assume $\cl\Gamma_i\cap\D A^\rho=\emptyset$, then we get
  $\cl\Gamma_i\subseteq A^\rho$ so that the closure of $\Gamma_i$ in
  $A^\rho$ coincides with the compact set $\cl\Gamma_i$. But this is a
  contradiction.
  The compactness of $\cl\Gamma_i$ for $i$ large enough follows
  from the completeness of $\cl{A^\rho}$ and the fact that, by the
  Ascoli-Arzel\`a Theorem, $\cl\Gamma_i$ is totally bounded, when $i$
  is sufficiently large. Hence the claim is proved.

  Again by Ascoli-Arzel\`a Theorem, we may assume that, as $i\to\infty$, 
$x_i\to x_0$ in $\ctnn$ and $\lambda_i\to\lambda_0$ with $(\lambda_0,x_0)\in\D
  A^\rho$. Passing to the limit in equation \eqref{due}, it is not
  difficult to show that $(\lambda_0,x_0)$ is a $T$-periodic pair for
  \eqref{DSVeq:0} in $\partial A^\rho$. This contradicts the
  assumption $\partial A^\rho\cap Y=\emptyset$ and proves the first
  part of the assertion.

  Let us prove the last part of the thesis. Consider the connected
  component $\Xi$ of $X$ that contains the connected set $\Gamma$ of
  the first part of the assertion.  We shall now show that $\Xi$ has
  the required properties. Clearly, $\Xi$ meets the set
  $\big\{(0,\cl{p})\in\Omega:\Phi(p)=0\big\}$ because the closure
  $\cl{\Gamma}^\Omega$ of $\Gamma$ in $\Omega$ does. Moreover,
  $\Xi\cap\Omega$ cannot be compact, since $\Xi\cap\Omega$, as a
  closed subset of $\Omega$, contains $\cl{\Gamma}^\Omega$, and
  $\cl{\Gamma}^\Omega$ is not compact. This completes the proof.
\end{proof}

\section{Applications and examples}
The purpose of this section is to illustrate the techniques developed
and the results obtained in the foregoing ones. In order to do so, we
will examine two classes of separated variables perturbed differential
equations.  Namely, perturbed decoupled systems and
differential-algebraic equations of a certain form.

\subsection{Weakly coupled equations}
Here, we consider delay periodic perturbations of a particular family
of ordinary differential equations on product manifolds. Namely, if
$N_1\subseteq \R^{n_1}$ and $N_2\subseteq \R^{n_2}$ are boundaryless
smooth manifolds, we consider the following differential equation on
$N=N_1\X N_2$:
\begin{equation} \label{SVDAEs-two-species-with-delay}
  \left\{ \begin{array}{l}
      \dot x_1(t) = a_1(t)\Phi_1\big(x_1(t)\big) + 
      \lambda \Xi_1\big(t, x(t), x(t-r)\big), \\
      \dot x_2(t) = a_2(t)\Phi_2\big(x_2(t)\big) + \lambda
      \Xi_2\big(t, x(t), x(t-r)\big),
    \end{array} \right.  
\end{equation} 
where $\Phi_1\colon N_1\to\R^{n_1}$, $\Phi_2\colon N_2\to\R^{n_2}$,
are (continuous) tangent vector fields, $\Xi_1\colon\R\X N\X
N\to\R^{n_1}$, $\Xi_2\colon\R\X N\X N\to\R^{n_2}$ and the maps
$a_i:\R\to\R$, $i=1,2$, are continuous, $T$-periodic in $t$. We also
assume that, for $i=1,2$, the average of $\avrg{a}_i$ of $a_i$ is
nonzero and that $\Xi_i$ is tangent to $N_i$ in the second variable.

Clearly, when $\lambda=0$, the resulting unperturbed equations are
completely decoupled. In essence, the perturbation provides the (only)
coupling in \eqref{SVDAEs-two-species-with-delay}.

We point out that equations of this form arise naturally from models
for a real system. Indeed, the equations
\eqref{SVDAEs-two-species-with-delay} are inspired to a model
(see, e.g.\ \cite[Chapter 2]{Fa:2001}) describing the interactions of
two species that share the same habitat and feed on the same resource.
Namely, if $x_1(t)$ and $x_2(t)$ are the densities of such a competing
species at time $t$, the model is as follows:
\begin{equation} \label{DSVeq:two-spec} \left\{\begin{array}{l}
      \dot x_1= a_1x_1 + x_1 (a_{12}x_2 - a_{11}x_1),\\
      \dot x_2= a_2x_2 + x_2 (a_{22}x_2 - a_{21}x_1),
    \end{array}\right.
\end{equation}
where $a_i$, $a_{j,k}$, $i,j,k=1,2$, are positive quantities and
$a_1$, $a_2$ represent the ``intrinsic'' growth rates of the two
species, $a_{11}$, $a_{22}$ give the strength of the intraspecific
competition and $a_{12}$, $a_{21}$ the strength of the interspecific
competition.
It may be reasonable (see e.g.\ \cite{kuang}) to assume that the
intrinsic growth rates undergo periodic fluctuations. We can model
this by letting $a_i$, $i=1,2$, be periodic functions. If we also
suppose that the resource on which $x$ and $y$ feed takes time $r$ to
recover, we are let to the following modification of
\eqref{DSVeq:two-spec}:
\begin{equation*} 
  \left\{\begin{array}{l} 
      \dot x_1(t)= a_1(t)x_1(t) + x_1(t) \big(a_{12}x_2(t-r) - a_{11}x_1(t-r)\big),\\
      \dot x_2(t)= a_2(t)x_2(t) + x_2(t) \big(a_{22}x_2(t-r) - a_{21}x_1(t-r)\big).
    \end{array}\right.
\end{equation*}
Let us point out that more general examples in the same direction are
considered in a number of models for the dynamics of animals
populations (see, e.g.\ \cite{brauer-cast, kuang}).

The system \eqref{SVDAEs-two-species-with-delay}, where a parameter
$\lambda\geq 0$ has been added can be thought as a far-reaching
generalization of the above system. However, our little digression
above is only intended as justification of our interest in Equation
\eqref{SVDAEs-two-species-with-delay}. In fact, in this section, we
consider solutions of this equation regardless of any possible
ecological meaning.

Even if Theorem \ref{tuno} cannot be applied directly to
\eqref{SVDAEs-two-species-with-delay}, we can use the same
strategy. We only sketch the argument. As in Remark
\ref{SVDAEs:remark-on-ivp}, assume that, for $i=1,2$, $\Phi_i$ are
$C^1$ so that uniqueness of the solutions of the following initial
value problems on $N=N_1\X N_2=N$ hold:
\begin{subequations}
  \begin{equation}\label{DSVeq:A-more}
    \dot x_1 = \Phi_1(x_1),\quad \dot x_2 = \Phi_2(x_2),\qquad x(0)=\xi_0,
  \end{equation}
  and
  \begin{equation}\label{DSVeq:B-more}
    \dot x_1 = a_1(t)\Phi_1(x_1),\quad  \dot x_2 = a_2(t)\Phi_2(x_2),\qquad  x(0)=\xi_0,
  \end{equation}
\end{subequations}
and let $\xi\colon J\to N$ and $u\colon I\to N$ be the maximal
solutions of \eqref{DSVeq:A-more} and \eqref{DSVeq:B-more},
respectively, with $I$ and $J$ the relative maximal intervals of
existence. Let $t>0$ be such that $\int_0^{l} a_i(s) ds \in I$,
$i=1,2$, for all $l\in [0,t]$, then it follows that
\begin{equation*}
  \xi(t)=\big(\xi_1(t),\xi_2(t)\big)
  =\bigg(u_1\Big(\int_0^t a_1(s) ds\Big),u_2\Big(\int_0^t a_2(s) ds\Big)\bigg),
\end{equation*} 
and hence $t\in J$.  Conversely, by a maximality argument, it can be
shown that $T\in J$ implies $\int_0^ta_i(s)ds\in I$, $i=1,2$.
%
%

As in Section \ref{SVADEs:section-poincare-trasl-op} we construct an
``infinite dimensional'' $T$-translation operator associated to
\ref{SVDAEs-two-species-with-delay} for $\lambda =0$. Namely, we let
$Q^{a_1,a_2}_T$ be the map that to any $\varphi\in\t N$ associates the
map $\theta\mapsto\zeta\big(T+\theta,\varphi(0)\big)$, whenever it
makes sense to do so. Here $\zeta(\cdot,p)$ denotes the unique
solution of \eqref{DSVeq:A-more} with $\xi_0=\varphi(0)$.

Let $\Phi\colon N\to\R^{n_1}\X\R^{n_2}=\R^{n_1+n_2}$ be the tangent
vector field on $N$ given by
$\Phi(p_1,p_2)=\big(\Phi_1(p_1),\Phi_2(p_2)\big)$. The following
result similar to Theorem \ref{thgrado} holds:

\begin{proposition} \label{SVDAEs:poincare-transl-op-(a-b)} Let $a_1$,
  $a_2$, $\Phi_1$, $\Phi_2$, $N_1$, $N_2$, $N$, $T$ and $Q^{a_1,a_2}$ be as above. Take
  $W\subseteq\t N$ open and such that the fixed point index of
  $Q^{a_1,a_2}$ is defined in $W$, then so is $\deg(\Phi, \giu W)$ and
  \begin{equation} \label{DSVeq:deg-deg-b}
    \ind(Q^{a_1,a_2},W)=(\sign\avrg{a}_1)^{\dim
      N_1}(\sign\avrg{a}_2)^{\dim N_2}\deg(\Phi,\giu W).
  \end{equation}
\end{proposition}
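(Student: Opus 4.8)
The plan is to follow the proof of Theorem~\ref{thgrado} almost line by line, now on the product manifold $N=N_1\X N_2$ and carrying along the two independent time changes produced by $a_1$ and $a_2$. Since $\Phi_1,\Phi_2$ are assumed $C^1$, the relevant Cauchy problems have unique solutions, $Q^{a_1,a_2}_T$ is well defined, and (because $T\ge r$) the Ascoli--Arzel\`a Theorem makes it locally compact, so that its fixed point index is available; no approximation argument is needed at this stage. First I would check, exactly as in Theorem~\ref{thgrado}, that $\deg(\Phi,\giu W)$ is defined: a zero $p=(p_1,p_2)\in\giu W$ of $\Phi$ yields the constant fixed point $\t p$ of $Q^{a_1,a_2}_T$, hence $\Phi^{-1}(0)\cap\giu W$ is a closed subset of the compact set $\fix(Q^{a_1,a_2}_T)\cap W$ and is therefore compact.

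Next I would reproduce the Commutativity step. Introduce the finite-dimensional $T$-translation operator $P^{a_1,a_2}_T$ associated to \eqref{DSVeq:B-more}, together with $h\colon\dom{P^{a_1,a_2}_T}\to\t N$ defined by $h(p)(\theta)=\zeta(\theta+T,p)$ (with $\zeta(\cdot,p)$ the solution of \eqref{DSVeq:B-more}) and $k\colon\t N\to N$, $k(\varphi)=\varphi(0)$. As in \eqref{compo}, one has $h\circ k=Q^{a_1,a_2}_T$ on $\dom{Q^{a_1,a_2}_T}$ and $k\circ h=P^{a_1,a_2}_T$ on $\dom{P^{a_1,a_2}_T}$, while $\dom{Q^{a_1,a_2}_T}=\{\varphi\in\t N:\varphi(0)\in\dom{P^{a_1,a_2}_T}\}$; the Commutativity Property of the fixed point index then gives, just as in the derivation of \eqref{indQPh}, that $\ind(Q^{a_1,a_2}_T,W)=\ind\big(P^{a_1,a_2}_T,h^{-1}(W)\big)$.

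It then remains to turn this finite-dimensional index into a degree and to extract the sign. Put $a_{i,0}=a_i/\avrg{a}_i$ and $\Psi_i=\avrg{a}_i\Phi_i$, so that \eqref{DSVeq:B-more} reads $\dot x_i=a_{i,0}(t)\Psi_i(x_i)$ with each $a_{i,0}$ of average $1$, and $P^{a_1,a_2}_T$ is unchanged. The product version of Remark~\ref{SVDAEs:remark-on-ivp} already recorded in the discussion preceding the statement shows that, since $\int_0^T a_{i,0}(s)\,ds=T$ for $i=1,2$, the operator $P^{a_1,a_2}_T$ coincides, domain and all, with the $T$-translation operator $P^{\Psi}_T$ of the \emph{autonomous} field $\Psi=(\Psi_1,\Psi_2)$ on $N$. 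Applying Proposition~\ref{prop:iaphi=iphi} to $\Psi$ (in the case $a\equiv1$) gives $\ind\big(P^{a_1,a_2}_T,h^{-1}(W)\big)=\deg\big(-\Psi,h^{-1}(W)\big)$, and, since every constant solution of \eqref{DSVeq:B-more} lies in $\giu W$, the Excision Property yields $\deg(-\Psi,h^{-1}(W))=\deg(-\Psi,\giu W)$, exactly as in \eqref{idovvia}. Finally one unwinds the scaling $\Psi=(\avrg{a}_1\Phi_1,\avrg{a}_2\Phi_2)$ by the factor-by-factor analogue of \eqref{degmoltip}, which replaces $\deg(-\Psi,\giu W)$ by $\deg(\Phi,\giu W)$ times the constant appearing in \eqref{DSVeq:deg-deg-b}; chaining the displayed equalities gives the claim.

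The step I expect to require the most care is this last factor-wise degree computation, together with the simultaneous handling of two different time changes $t\mapsto\int_0^t a_i(s)\,ds$. Because $\giu W$ need not be a product set, the componentwise scaling/reflection property of the degree on $N_1\X N_2$ cannot simply be read off a product box; one must first establish multiplicativity of the degree over an arbitrary admissible open subset of $N_1\X N_2$ --- additivity over a finite cover of the (compact) zero set by product boxes contained in $\giu W$ --- and only then deduce that passing from $(\Phi_1,\Phi_2)$ to $(-\avrg{a}_1\Phi_1,-\avrg{a}_2\Phi_2)$ produces the sign $(-\sign\avrg{a}_1)^{\dim N_1}(-\sign\avrg{a}_2)^{\dim N_2}$. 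Everything else is a verbatim transcription of the proof of Theorem~\ref{thgrado}.
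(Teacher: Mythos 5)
Your proof reproduces the paper's sketch, which in turn instructs one to ``follow closely the argument of Theorem~\ref{thgrado}'': compactness of $\Phi^{-1}(0)\cap\giu W$ as a closed subset of $\fix(Q^{a_1,a_2}_T)\cap W$, the Commutativity step giving $\ind(Q^{a_1,a_2}_T,W)=\ind(P^{a_1,a_2}_T,h^{-1}(W))$, the normalization $a_{i,0}=a_i/\avrg{a}_i$, $\Psi_i=\avrg{a}_i\Phi_i$ together with the identification $P^{a_1,a_2}_T=P^{\Psi}_T$ via the product version of Remark~\ref{SVDAEs:remark-on-ivp}, and then Proposition~\ref{prop:iaphi=iphi} and Excision. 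Structurally this is exactly what the paper intends.

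There is, however, an internal inconsistency in your last step, and it is your closing paragraph, not formula \eqref{DSVeq:deg-deg-b}, that carries the correct sign. At a nondegenerate zero $(q_1,q_2)$ of $v=(v_1,v_2)$ on $N=N_1\X N_2$ the differential of $v_c=(c_1v_1,c_2v_2)$ is block diagonal, so
\[
\det\textrm{d}_{(q_1,q_2)}v_c=c_1^{\dim N_1}c_2^{\dim N_2}\det\textrm{d}_{(q_1,q_2)}v
\quad\text{and hence}\quad
\deg(v_c,\mathcal V)=(\sign c_1)^{\dim N_1}(\sign c_2)^{\dim N_2}\deg(v,\mathcal V)
\]
on any admissible open $\mathcal V$ --- the two extra minus signs in the ``easily verified fact'' quoted in the paper's sketch are spurious, and no additivity-over-product-boxes argument is needed: this is not a product formula for the degree, merely a Jacobian computation made global by the usual perturbation to nondegenerate zeros, valid on any admissible open set. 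Applying this with $c_i=-\avrg{a}_i$ to $-\Psi=(-\avrg{a}_1\Phi_1,-\avrg{a}_2\Phi_2)$ gives $\deg(-\Psi,\giu W)=(-\sign\avrg{a}_1)^{\dim N_1}(-\sign\avrg{a}_2)^{\dim N_2}\deg(\Phi,\giu W)$, exactly as you write in your final paragraph, and \emph{not} the constant appearing in \eqref{DSVeq:deg-deg-b} that you invoke two sentences earlier. A consistency check confirms this: with $a_1=a_2=a$ the operator $Q^{a_1,a_2}_T$ is precisely $Q^{a\Phi}_T$ on $N=N_1\X N_2$, and Theorem~\ref{thgrado} then yields $(-\sign\avrg{a})^{\dim N}=(-\sign\avrg{a}_1)^{\dim N_1}(-\sign\avrg{a}_2)^{\dim N_2}$, which contradicts \eqref{DSVeq:deg-deg-b} whenever $\dim N$ is odd and $\avrg{a}>0$. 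So the printed Proposition (and the degree fact in the paper's sketch, from which it inherits the error) is off by a sign; you should state the formula you actually derived and flag the discrepancy, rather than asserting that the chain of equalities recovers \eqref{DSVeq:deg-deg-b} as written.
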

\begin{proof}[Sketch of the proof.]
  The assertion can be proved by following closely the argument of
  Theorem \ref{thgrado} and taking into account the following
  well-known and easily verified fact of degree theory:

  For any given pair of constants $c_1,c_2\in\R\setminus\{0\}$ and tangent
  vector fields $v_1\colon N_1\to\R^{n_1}$ and
  $v_2\colon N_2\to\R^{n_2}$, admissible on an open
  $\mathcal{V}\subseteq N$, one has
  \begin{equation*}
    \deg(v_c,\mathcal{V})=(-\sign c_1)^{\dim N_1}(-\sign c_2)^{\dim N_2}
    \deg(v,\mathcal{V}).
  \end{equation*}
  where $v, v_c\colon N\to\R^{n_1+n_2}$ are the tangent vector fields on
  $N$ given by $v(p_1,p_2)=\big(v_1(p_1),v_2(p_2)\big)$ and 
  $v_c(p_1,p_2)=\big(c_1v_1(p_1),c_2v_2(p_2)\big)$ for
  all $(p_1,p_2)\in N$.
\end{proof}

Any $(\lambda,\zeta)\in[0,\infty)\X\ctnn$, with $\zeta$ solution of
\eqref{SVDAEs-two-species-with-delay}, is a \emph{$T$-periodic
  pair}. Such a pair is \emph{trivial} if $\lambda=0$ and $\zeta$ is
constant.  An argument that follows very closely the one of Theorem
\ref{tuno} yields the following result:

\begin{proposition} \label{DSVeq:corollary-wcoupled} For $i=1,2$, let
  $\Phi_i\colon N_i\to\R^{n_i}$, be (continuous) tangent vector
  fields, and let $\Xi_i\colon\R\X N\X N\to\R^{n_i}$ be tangent to
  $N_i$ in the second variable; assume that the $\Xi_i$'s as well as
  the maps $a_i:\R\to\R$, are continuous, $T$-periodic in $t$. Suppose
  also that, for $i=1,2$, the average of $\avrg{a}_i$ of $a_i$ is
  nonzero.  Let $\Omega\subseteq [0,\infty)\times C_T(N)$ be open, and
  assume that $\deg\big(\Phi,\Omega\cap\t N\big)$ is defined and
  nonzero. Then $\Omega$ contains a connected set of nontrivial
  $T$-periodic pairs of \eqref{SVDAEs-two-species-with-delay} whose
  closure in $\Omega$ meets the set $\big\{(0,\cl p)\in\Omega:
  \Phi(p)=(0)\big\}$ and is not compact.
\end{proposition}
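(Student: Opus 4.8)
The strategy is to transcribe, almost verbatim, the proof of Theorem~\ref{tuno}, the only substantial change being that the r\^ole of Theorem~\ref{thgrado} and of the operator $Q^{a\Phi}_T$ is played, respectively, by Proposition~\ref{SVDAEs:poincare-transl-op-(a-b)} and by $Q^{a_1,a_2}_T$. As there, I would first dispose of the case in which $\Phi_1,\Phi_2,\Xi_1,\Xi_2$ are $C^1$, so that \eqref{SVDAEs-two-species-with-delay} together with an initial datum $\varphi\in\t N$ on $[-r,0]$ has a unique solution $\xi^\lambda(\varphi,\cdot)$; continuous dependence on data then makes the set $\mathcal V$ of pairs $(\lambda,\varphi)$ for which $\xi^\lambda(\varphi,\cdot)$ is defined on $[0,T]$ open, the set $S$ of starting pairs locally compact (Ascoli-Arzel\`a), and, exactly as in Lemma~\ref{lemmatuno}, the map $h(\lambda,\zeta)=\big(\lambda,\zeta|_{[-r,0]}\big)$ a homeomorphism between the set $X$ of $T$-periodic pairs of \eqref{SVDAEs-two-species-with-delay} and $S$, carrying trivial pairs to trivial pairs and $X\cap\Omega$ to $S_\Omega:=\{(\lambda,\varphi)\in S:\text{the solution of \eqref{SVDAEs-two-species-with-delay} lies in }\Omega\}$. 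Given $\Omega$ as in the statement, one fixes an open $W\subseteq[0,\infty)\X\t N$ with $S\cap W=S_\Omega$ and checks, by excision, that $\deg\big(\Phi,\giu{(W_0)}\big)=\deg(\Phi,\Omega\cap N)\neq0$.

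The heart of the $C^1$ case is the analogue of Theorem~\ref{T.3.1} for \eqref{SVDAEs-two-species-with-delay}: if $\deg\big(\Phi,\giu{(W_0)}\big)\ne0$, then $(S\cap W)\setminus\{(0,\t p):\Phi(p)=0\}$ possesses a connected subset whose closure in $S\cap W$ meets the trivial pairs and is noncompact. This I would obtain from Lemma~\ref{L.3.1} applied to $Y=S\cap(W\cap\mathcal V)$ and $Z=\{(0,\t p)\in W:\Phi(p)=0\}$, the latter being compact precisely because $\deg\big(\Phi,\giu{(W_0)}\big)$ is defined. If the conclusion failed there would be a relatively open compact $C\subseteq Y$ containing $Z$; enlarging $C$ to a suitable open neighborhood $A$ with $A\cap Y=C$ and $\D A\cap Y=\emptyset$ and invoking the Generalized Homotopy Invariance of the fixed point index for the family $\Q_\lambda(\varphi)(\theta)=\xi^\lambda(\varphi,\theta+T)$ (whose fixed points in $A_\lambda$ are exactly the starting pairs in $A$, and with $\Q_0=Q^{a_1,a_2}_T$) would give $0=\ind\big(\Q_0,A_0\big)=\ind\big(Q^{a_1,a_2}_T,A_0\big)$. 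But Proposition~\ref{SVDAEs:poincare-transl-op-(a-b)} and excision yield $\big|\ind(Q^{a_1,a_2}_T,A_0)\big|=\big|\deg(\Phi,\giu{(A_0)})\big|=\big|\deg(\Phi,\giu{(W_0)})\big|\ne0$, a contradiction. Pushing the resulting connected set forward through $h^{-1}$ and recalling that $X$ is closed in $[0,\infty)\X\ctnn$ gives the statement for $C^1$ data.

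For arbitrary continuous $\Phi_i,\Xi_i$ I would run the approximation argument of the proof of Theorem~\ref{tuno} unchanged. Supposing Lemma~\ref{L.3.1} fails for $Y=X\cap\Omega$ and $Z=\Upsilon\cap\Omega$, where $\Upsilon=\{(0,\cl p):\Phi(p)=0\}$, one selects a relatively open compact $C\supseteq Z$ and a bounded open $A^\rho$ with $\cl{A^\rho}\subseteq\Omega$, $A^\rho\cap Y=C$, $\D A^\rho\cap Y=\emptyset$, $A^\rho\cap N$ relatively compact in $\Omega\cap N$, and the corresponding set of trajectories precompact; then one approximates $\Phi_i,\Xi_i$ uniformly by $C^1$ tangent vector fields $\Phi_i^{(j)},\Xi_i^{(j)}$ with $\Xi_i^{(j)}$ still $T$-periodic in $t$, so that for $j$ large $\deg\big(\Phi^{(j)},A^\rho\cap N\big)=\deg(\Phi,\Omega\cap N)\ne0$. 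The already established $C^1$ case then produces a connected $\Gamma_j\subseteq A^\rho$ of nontrivial $T$-periodic pairs of the $j$-th approximating system whose closure in $A^\rho$ meets the trivial pairs and is noncompact; Ascoli-Arzel\`a together with the completeness of $\cl{A^\rho}$ forces $\cl{\Gamma_j}$ (closure in $[0,\infty)\X\ctnn$) to be compact, hence to meet $\D A^\rho$, producing $T$-periodic pairs $(\lambda_j,x_j)\in\D A^\rho$ of the approximating equations; a subsequential limit is a $T$-periodic pair of \eqref{SVDAEs-two-species-with-delay} lying in $\D A^\rho\cap Y$, contradicting $\D A^\rho\cap Y=\emptyset$. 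The assertion about the connected component follows as in the last part of the proof of Theorem~\ref{tuno}.

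The only genuinely new point is the index/degree identity invoked at the contradiction step, and it turns out to be harmless: since $(\sign\avrg a_1)^{\dim N_1}(\sign\avrg a_2)^{\dim N_2}=\pm1$, Proposition~\ref{SVDAEs:poincare-transl-op-(a-b)} gives $\big|\ind(Q^{a_1,a_2}_T,V)\big|=\big|\deg(\Phi,\giu V)\big|$ for every admissible open $V$, which is exactly what the argument uses. The one subtlety worth flagging is that the two, generally distinct, time reparametrizations $t\mapsto\int_0^t a_i(s)\,ds$ coming from \eqref{DSVeq:A-more}--\eqref{DSVeq:B-more} must be reconciled with the single delay $r$ and the single period $T$; but this is precisely what the maximality argument preceding Proposition~\ref{SVDAEs:poincare-transl-op-(a-b)}---the two-component version of Remark~\ref{SVDAEs:remark-on-ivp}---already settles, so no further obstacle arises and the proof is essentially bookkeeping.
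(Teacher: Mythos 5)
Your proposal is correct and takes exactly the approach the paper intends: the paper does not give a detailed proof of Proposition~\ref{DSVeq:corollary-wcoupled}, stating only that it follows by ``an argument that follows very closely the one of Theorem~\ref{tuno}'', which is precisely what you have carried out, with Theorem~\ref{thgrado} replaced by Proposition~\ref{SVDAEs:poincare-transl-op-(a-b)} and $Q^{a\Phi}_T$ by $Q^{a_1,a_2}_T$. Your observation that only $\big|\ind(Q^{a_1,a_2}_T,\cdot)\big|=\big|\deg(\Phi,\cdot)\big|$ is needed is well placed and in fact quietly absorbs a sign discrepancy in the paper (Theorem~\ref{thgrado} carries the factor $(-\sign\avrg a)^{\dim N}$, whereas formula~\eqref{DSVeq:deg-deg-b} in Proposition~\ref{SVDAEs:poincare-transl-op-(a-b)} omits the minus signs), and your remark that the two reparametrizations $t\mapsto\int_0^t a_i(s)\,ds$ do not create a new difficulty because they are already reconciled in the two-component version of Remark~\ref{SVDAEs:remark-on-ivp} is also correct.
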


\begin{example}
  Let $T = 2\pi$ and consider the following system of equations in $\R^3$
  \begin{equation*}\label{first-example}
    \left\{ \begin{array}{l}
        \dot x_2  = \big(2 +\sin (t)\big) (x_2+x_3),\\
        \dot x_1 - \dot x_3  = |\cos (t)| x_1  - \big(2 +\sin (t)\big)x_3, \\
        -\dot x_2 + \dot x_3 =  -\big(2 +\sin (t)\big) x_2, \\
      \end{array}\right.
  \end{equation*}
  that we write more compactly as follows:
  \begin{equation}\label{matrix-form} 
E \dot x =  A(t) x,
  \end{equation}
  where, for $t\in\R$,
  \begin{equation*}
    E=\begin{pmatrix}
      0 & 1 & 0 \\
      1 & 0 & -1 \\
      0 & -1 & 1 \\
    \end{pmatrix}
    \quad \text{and}\quad A(t)=
    \begin{pmatrix}
      0          &  2+ \sin (t)           & 2+ \sin (t) \\
      |\cos (t)| & 0                      &  -2 -\sin (t) \\
      0  & -2 -\sin (t) & 0\\
    \end{pmatrix}.
  \end{equation*}
  Let us now consider the following $2\pi$-periodic perturbation of \eqref{matrix-form}:
  \begin{equation} \label{perturbed} 
E \dot x(t) =  A(t)x(t) + \lambda \mathcal{H}\big(t, x(t),x(t-r)\big), \quad \lambda\geq 0,
  \end{equation}
  where $r>0$ is a given time lag and $\mathcal{H}\colon\R\X\R^3\X\R^3
  \to\R^3$ is a continuous map which is $2\pi$-periodic in its first
  variable.
  Multiplying \eqref{perturbed} on the left by $E^{-1}$ and setting, for all $(t,p,q)\in\R\X\R^3\X\R^3$,
\begin{equation*}
H(t, p,q)=E^{-1}\mathcal{H}(t, p, q),\quad\text{and}\quad B(t)=E^{-1}A(t)
  \end{equation*}
we see that \eqref{perturbed} becomes
 \begin{equation}\label{DSVeq:ex-wcoupled}
 \dot x(t)=B(t)x(t)+\lambda H\big( t,x(t),x(t-r)\big).
 \end{equation}
Clearly,
\[
 B(t)=\begin{pmatrix}
       |\cos(t)| &  0        &  0       \\
       0         & 2+\sin(t) &  0       \\
       0          &  0       & 2+\sin(t)
      \end{pmatrix}
\begin{pmatrix}
 1 & 0 & 0 \\
 0 & 1 & 1 \\
 0 & 0 & 1 
\end{pmatrix}.
\]
Hence, identifying $\R^3$ with the product space $\R\X\R^2$, we see that
\eqref{DSVeq:ex-wcoupled} falls into the family of weakly coupled systems 
\eqref{SVDAEs-two-species-with-delay}  with $N_1=\R$ and $N_2=\R^2$.
Let $\Phi\colon\R\X\R^2\to\R^3$ be given by $\Phi(p;q_1,q_2):=(p,q_1+q_2,q_1)$, 
and take $\Omega=[0,\infty)\X C_T(\R^3)$.  Since, $\Phi$ is admissible for the degree 
in $\Omega\cap\R^3=\R^3$ and $\deg\big(\Phi,\R^3\big) = 1$, by Proposition 
\ref{DSVeq:corollary-wcoupled}, there exists a connected set of nontrivial $T$-periodic 
pairs of \eqref{DSVeq:ex-wcoupled} whose closure meets the set $\Phi^{-1}(0)=\{(0;0,0)\}$ 
and is not compact. Since solutions of \eqref{DSVeq:ex-wcoupled} are solutions of 
\eqref{perturbed} and vice versa, this statement concerns, in fact, the $T$-periodic 
solutions of \eqref{perturbed}.
\end{example}

\subsection{Periodic perturbations of separated variables Differential-Algebraic Equations}
Here, as in the Introduction, $U\subseteq\R^k\times\R^s$ is open and
connected and $g\colon U\to\R^s$ is $C^\infty$ with the property that
$\partial_2g(x,y)$ is nonsingular for any $(x,y)\in U$. In this way,
$M:=g^{-1}(0)$ is a $C^\infty$ submanifold of $\R^k\times\R^s$. We
also require that $f$ and $h$, as in Equation \eqref{DSVeq:0a}, are
continuous and that $a$ and $h$ are $T$-periodic in $t$ with the
average of $a$ different from zero.

In what follows, we say that $\big(\lambda,
(x,y)\big)\in[0,\infty)\X C_T(U)$ is a $T$-periodic pair of
\eqref{DSVeq:0a}, if $(x,y)$ is a $T$-periodic solution of
\eqref{DSVeq:0a} corresponding to $\lambda$.  According to the
convention introduced in
\eqref{DSVeq:graph}-\eqref{DSVeq:convention-constan-func}, any $(p,
q)\in U$ is identified with the element $(\cl{p},\cl{q})$ of $C_T(U)$
that is constantly equal to $(p, q)$. A $T$-periodic pair of the form
$\big(0,(\cl p,\cl q)\big)$ will be called \emph{trivial}. This
subsection is devoted to the study of the set of $T$-periodic pairs of
equation \eqref{DSVeq:0a}.

Thanks to our assumption on $g$ it is possible to associate tangent
vector fields on $M$ to the functions $f$ and $h$ in
\eqref{DSVeq:0a}. Consider first maps $\Psi\colon U\to\rkrs$ and
$\Upsilon\colon\R\X U\X U\to\rkrs$ as follows:
 \begin{subequations}
\begin{align*}
 &\Psi (p_1, q_1)= \big( f(p_1, q_1), -[\D_2 g(p_1,q_1)]^{-1} \D_1g (p_1, q_1) f(p_1, q_1) \big), \\[2mm]
&\begin{aligned} 
    \Upsilon\big(t, &(p_1, q_1), (p_2, q_2)\big)= \\
    &\Big( h\big(t, (p_1, q_1), (p_2, q_2)\big), -[\D_2 g(p_1, q_1
    )]^{-1}\D_1 g (p_1, q_1) h\big(t, (p_1, q_1), (p_2,q_2)\big)
    \Big),
  \end{aligned}\!\!\!
\end{align*}
 \end{subequations}
and then define
\begin{equation}\label{campiv}
  \Phi=\Psi|_M\quad\text{ and }\quad \Xi=\Upsilon|_{\R\X M\X M}.
\end{equation}
Since $T_{(p,q)}M$ coincides with the kernel of the differential of
$g$ at any $(p,q)\in M$, it can be easily seen that $\Phi(p,q)\in
T_{(p,q)}M$ and that $\Xi\big(t,(p_1,q_1),(p_2,q_2)\big)\in T_{(p_1,
  q_1)}M$, for all $\big(t,(p_1, q_1),(q_2,p_2)\big)\in\R\X M\X
M$. Therefore, the following is a delay differential equation on $M$:
\begin{equation} \label{DSVeq:eq-su-M} \dot \zeta(t) =
  a(t)\Phi\big(\zeta(t)\big) +
  \lambda\Xi\big(t,\zeta(t),\zeta(t-r)\big),\quad \lambda\geq 0.
\end{equation} 
Using Lemma \ref{lemmequiv} it is not difficult to show that
\eqref{DSVeq:eq-su-M} is equivalent to \eqref{DSVeq:0a}, in the sense
that $\zeta=(x,y)$ is a solution of \eqref{DSVeq:eq-su-M}, on an
interval $I\subseteq\R$, if and only if so is $(x,y)$ for
\eqref{DSVeq:0a}.
Thus, we can combine the results of Theorems \ref{DSVeq:teo1} and
\ref{tuno}.  The map $F\colon U\to\rkrs$ introduced in \eqref{DSVeq:F}
becomes, in our case,
\begin{equation*}
  F(x,y)= \big(f(x,y),g(x,y)\big).
\end{equation*}
So, with the notation recalled above, the set of trivial $T$-periodic
pairs can be written as $\big\{(0, (\cl{p}, \cl{q})) \in [0,\infty) \X
C_T (U) : F(p,q) = (0, 0)\big\}$. Also, as in Section \ref{sec:rami},
given $ \Omega\subseteq [0,\infty) \X C_T (U)$, we denote by
$\Omega\cap U$ the subset of $U$ whose points, regarded as constant
functions, lie in $\Omega$. Namely, $\Omega \cap U =\big\{(p, q) \in U
: (0, (\cl{p}, \cl{q})) \in\Omega\big\}$.

We finally state and prove the following consequence of Theorems
\ref{DSVeq:teo1} and \ref{tuno}, which is inspired to \cite[Th.\
5.1]{CaSp}.

\begin{theorem} \label{DSVeq:main-thm} Let $U \subseteq \R^k \X \R^s$
  be open and connected.  Let $g : U \to \R^s$, $f : U \to \R^k$, $a :
  \R\to \R$ and $h : \R \X U \to \R^k$ be as above.  Let also $F(p, q)
  = \big(f(p, q), g(p, q)\big)$. Given $\Omega \subseteq [0,\infty) \X
  C_T (U)$ open, assume that $\deg(F,\Omega\cap U)$ is well-defined
  and nonzero.  Then, there exists a connected set of nontrivial
  solution pairs of \eqref{DSVeq:0a} whose closure in $\Omega$ is
  noncompact and meets the set $\big\{(0, (\overline{p},
  \overline{q})) \in \Omega : F(p, q) = (0, 0)\big\}$ of the trivial
  $T$-periodic pairs of \eqref{DSVeq:0a}.
\end{theorem}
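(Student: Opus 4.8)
The plan is to deduce the statement from Theorem \ref{tuno} applied to the delay ODE \eqref{DSVeq:eq-su-M} on the manifold $M=g^{-1}(0)$, after converting the hypothesis on $\deg(F,\cdot)$ into one on $\deg(\Phi,\cdot)$ by means of Theorem \ref{DSVeq:teo1}. First I would record two elementary facts. By Lemma \ref{lemmequiv} and the discussion preceding the statement, a pair $\big(\lambda,(x,y)\big)$ is a $T$-periodic pair of \eqref{DSVeq:0a} if and only if it is one of \eqref{DSVeq:eq-su-M}, and triviality is preserved in both directions. Moreover, any $T$-periodic solution $(x,y)$ of \eqref{DSVeq:0a} satisfies $g\big(x(t),y(t)\big)=0$, hence is valued in $M$; consequently the set $X$ of $T$-periodic pairs of \eqref{DSVeq:0a} is entirely contained in $[0,\infty)\X C_T(M)$.

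Next I would pass to the open set $\Omega':=\Omega\cap\big([0,\infty)\X C_T(M)\big)$. Since $C_T(M)\subseteq C_T(U)$, this is open in $[0,\infty)\X C_T(M)$, and one checks directly that $\Omega'\cap M=M\cap(\Omega\cap U)$, while the set $\big\{(0,(\cl p,\cl q))\in\Omega':\Phi(p,q)=0\big\}$ coincides with $\big\{(0,(\cl p,\cl q))\in\Omega:F(p,q)=(0,0)\big\}$: indeed, for $(p,q)\in M$ the identity $\Phi=\Psi|_M$ shows that $\Phi(p,q)=0$ is equivalent to $f(p,q)=0$, hence, together with $g(p,q)=0$, to $F(p,q)=(0,0)$. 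As $\Omega\cap U$ is open in $U$, Theorem \ref{DSVeq:teo1} applies with $V=\Omega\cap U$: since $\deg(F,\Omega\cap U)$ is well defined and nonzero, so is $\deg(\Phi,\Omega'\cap M)=\deg(\Phi,M\cap(\Omega\cap U))$, and $|\deg(\Phi,\Omega'\cap M)|=|\deg(F,\Omega\cap U)|\ne 0$, so in particular it does not vanish.

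I would then apply Theorem \ref{tuno} to Equation \eqref{DSVeq:eq-su-M} with the manifold $N=M$ and the open set $\Omega'$. All hypotheses hold: $\Phi$ and $\Xi$ from \eqref{campiv} are continuous and tangent to $M$ in the required sense (being restrictions of the continuous maps $\Psi,\Upsilon$), $\Xi$ and $a$ are $T$-periodic in $t$, $\avrg a\ne 0$, and $\deg(\Phi,\Omega'\cap M)$ is defined and nonzero by the previous step. Theorem \ref{tuno} then yields a connected set $\Gamma$ of nontrivial $T$-periodic pairs of \eqref{DSVeq:eq-su-M} whose closure in $\Omega'$ is noncompact and meets $\big\{(0,(\cl p,\cl q))\in\Omega':\Phi(p,q)=0\big\}$. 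By the first paragraph, $\Gamma$ is a connected set of nontrivial $T$-periodic pairs of \eqref{DSVeq:0a}, and the set it meets is precisely $\big\{(0,(\cl p,\cl q))\in\Omega:F(p,q)=(0,0)\big\}$.

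The last point is to replace ``closure in $\Omega'$'' by ``closure in $\Omega$''. Since $\Gamma\subseteq X\subseteq[0,\infty)\X C_T(M)$ and $M$ is closed in $U$, any point of the closure of $\Gamma$ that lies in $\Omega$ is still valued in $M$; hence the closure of $\Gamma$ in $\Omega$ is contained in $\Omega'$ and therefore coincides with its closure in $\Omega'$, so the noncompactness and the intersection property transfer unchanged, which gives the assertion. The argument is essentially bookkeeping that glues together two earlier theorems; the points demanding care are precisely this last identification of closures — which rests on the automatic inclusion of $T$-periodic solutions of the DAE into $M$ — together with the fact that Theorem \ref{DSVeq:teo1} controls only the absolute value of the degree, which is harmless here since only its non-vanishing is used.
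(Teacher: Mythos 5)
Your proof is correct and follows essentially the same route as the paper's: you pass to the open set $\Omega'=\Omega\cap\big([0,\infty)\X C_T(M)\big)$ (the paper's $\mathcal O$), use Theorem \ref{DSVeq:teo1} to convert the nonvanishing of $\deg(F,\Omega\cap U)$ into that of $\deg(\Phi,\Omega'\cap M)$, apply Theorem \ref{tuno} to \eqref{DSVeq:eq-su-M} on $M$, transfer back through the equivalence of \eqref{DSVeq:eq-su-M} and \eqref{DSVeq:0a}, and finally use the closedness of $M$ in $U$ to identify closures in $\Omega'$ and in $\Omega$. The only cosmetic difference is that you make explicit the observation that $T$-periodic solutions of \eqref{DSVeq:0a} are automatically $M$-valued, which the paper leaves implicit.
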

\begin{proof}
  Let $\Phi$ and $\Xi$ be the tangent vector fields defined in
  \eqref{campiv}. Let also $\mathcal{O}$ be the open subset of
  $[0,\infty)\X C_T(M)$ given by
  \begin{equation*}
    \mathcal{O}=\Omega\cap \Big([0,\infty)\X C_T(M)\Big).
  \end{equation*} 
  For any $Y\subseteq M$, by $\mathcal{O}\cap Y$ we mean the set of
  all those points of $Y$ that, regarded as constant functions, lie in
  $\mathcal{O}$. Using this convention, one has that $\Omega\cap
  Y=\mathcal{O}\cap Y$ and, in particular, $\Omega \cap M
  =\mathcal{O}\cap M$. Thus, Theorem \ref{DSVeq:teo1} implies that
  \begin{equation*}
    |\deg(\Phi, \mathcal{O} \cap M)| = |\deg(\Phi,\Omega\cap M)|
    = |\deg\big(F,\Omega \cap U\big)|\neq 0.
  \end{equation*} 
  Theorem \ref{tuno} yields a connected set $\Lambda$ of nontrivial
  $T$-periodic pairs of \eqref{DSVeq:eq-su-M} whose closure in
  $\mathcal{O}$ is not compact and meets the set
  \[
  \big\{\big(0,(\cl p,\cl q)\big)\in\mathcal{O}:\Phi(p,q)=(0,0)\big\}=
  \big\{(0,\cl p,\cl q)\in\Omega:F(p,q)=(0,0)\big\}.
  \]
  The equivalence of \eqref{DSVeq:eq-su-M} with \eqref{DSVeq:0a} imply
  that each $(\lambda,(x,y))\in\Lambda$ is a nontrivial $T$-periodic
  pair of \eqref{DSVeq:0a} as well. Since $M$ is closed in $U$, any
  relatively closed subset of $\mathcal{O}$ is relatively closed in
  $\Omega$ too and vice versa. Thus, the closure of $\Lambda$ in
  $\mathcal{O}$ coincides with the closure of $\Lambda$ in $\Omega$,
  and hence $\Lambda$ fulfills the assertion.
\end{proof}


\begin{thebibliography}{99}
\bibitem{BIS:2011} L.\ Bisconti, \emph{Harmonic solutions to a class of
    Differential-Algebraic Equations with separated variables},  Electron. J. 
  Differential Equations, Vol. 2012 (2012), No. 2, 1-15.


\bibitem{brauer-cast} F.\ Brauer and C. Castillo-Ch\'avez,
  \emph{Mathematical Models in Population Biology and Epidemiology},
  Springer-Verlag, Texts in Applied Mathematics Vol. 40, 2000.

\bibitem{CaSp} A.\ Calamai and M.\ Spadini, \emph{Branches of forced
    oscillations for a class of constrained ODEs: a topological
    approach}, NoDEA, online first, DOI: 10.1007/s00030-011-013-1.

\bibitem{Dug} J.\ Dugundij, \textit{Topology}, Allyn and Bacon series
  in advanced mathematics, Allyn and Bacon, Boston, 1966.

\bibitem{Fa:2001} M.\ Farkas, \emph{Dynamical models in biology},
  Academic Press, San Diego, USA, 2001.

\bibitem{FP93} M.\ Furi, M.\ P.\ Pera, \textit{A continuation
    principle for periodic solutions of forced motion equations on
    manifolds and application to bifurcation theory}, Pacific
  J. Math. \textbf{160} (1993), 219-244.

\bibitem{fupespa} M.\ Furi, M.\ P.\ Pera and M.\ Spadini, \emph{The
    fixed point index of the Poincar\'e operator on differentiable
    manifolds}, Handbook of topological fixed point theory, Brown
  R. F., Furi M., G\'oniewicz L., Jiang B. (Eds.), Spinger, 2005.

\bibitem{fuspa:1997} M.\ Furi and M.\ Spadini, \emph{On the set of
    harmonic solutions of periodically perturbed autonomous
    differential equations on manifolds}, Nonlin. Anal., Vol. 29
  (1997), No. 8, 963-970.

\bibitem{FS09} M.\ Furi, M.\ Spadini, \textit{Periodic perturbations
    with delay of autonomous differential equations on manifolds},
  Adv.\ Nonlinear Stud.\ 9, No. 2, 263-276 (2009).

\bibitem{difftop} V.\ Guillemin and A.\ Pollack,
  \emph{Differential-Topology}, Prentice-Hall Inc., Englewood Cliffs,
  New Jersey, 1974.

\bibitem{kuang} Y.\ Kuang, \emph{Delay Differential Equations with
    Applications in Population Dynamics}, Mathematics in Science and
  Engineering, Vol. 191. Academic Press, Inc., Boston, 1993.

\bibitem{dae} P. \ Kunkel and V.\ Mehrmann,
  \emph{Differential-Algebraic Equations: Analysis and Numerical
    Solutions.}, EMS Textbooks in Mathematics, 2006.

\bibitem{Lang-1} S.\ Lang, \emph{Differential Manifolds}, Addison
  Wesley Pubbl. Comp. Inc., Reading, Massachussetts, 1972.

\bibitem{milnor} J.\ W. \ Milnor, \emph{Topology from the
    differentiable viewpoint}, Univ. press of Virginia,
  Charlottesville, 1965.


\bibitem{Ol69} W.\ M.\ Oliva, \textit{Functional differential
    equations on compact manifolds and an approximation theorem}, J.\
  Differential Equations \textbf{5} (1969), 483-496.

\bibitem{spaSepVar} M. \ Spadini, \emph{Harmonic solutions to
    perturbations of periodic separated variables ODEs on manifolds},
  Electron. J. Differential Equations Vol. 2003 (2003), No. 88.


\end{thebibliography}
\end{document}